\newtheorem{theorem}{Theorem}
\newtheorem{lemma}[theorem]{Lemma}
\newtheorem{definition}[theorem]{Definition}
\newtheorem{corollary}[theorem]{Corollary}
\newtheorem{proposition}[theorem]{Proposition}
\newcommand{\Pb}{\mathbb{P}}
\newcommand{\Z}{\mathbb{Z}}
\newcommand{\N}{\mathbb{N}}
\newcommand{\R}{\mathbb{R}}
\newcommand{\Bin}{\mathrm{Bin}}
\newcommand{\rc}{\mathrm{rc}}
\newcommand{\diam}{\mathrm{diam}}
\title{The hitting time of rainbow connection number two}
\date{September 13, 2012}
\renewcommand{\leq}{\leqslant} 
\renewcommand{\geq}{\geqslant}
\renewcommand{\le}{\leqslant} 
\renewcommand{\ge}{\geqslant}
\renewcommand{\epsilon}{\varepsilon}
\author{Annika Heckel and Oliver Riordan
\thanks{Mathematical Institute, University of Oxford,
24--29 St Giles', Oxford OX1 3LB, UK. E-mail: 
\texttt{$\{$heckel,riordan$\}$@maths.ox.ac.uk}
}
}
\begin{document}
\maketitle

\begin{abstract}
In a graph $G$ with a given edge colouring, a rainbow path is a path all of whose edges have distinct colours. The minimum number of colours required to colour the edges of $G$ so that every pair of vertices is joined by at least one rainbow path is called the rainbow connection number $\rc(G)$ of the graph $G$. For any graph $G$, $\rc(G) \geq \diam(G)$. We will show that for the Erd\H{o}s--R{\'e}nyi random graph $\mathcal{G}(n,p)$ close to the diameter $2$ threshold, with high probability if $\diam(G)=2$ then $\rc(G)=2$. In fact, further strengthening this result, we will show that in the random graph process, with high probability the hitting times of diameter $2$ and of rainbow connection number $2$ coincide.
\end{abstract}

\section{Introduction}

The rainbow connection number is a new concept for measuring the connectivity of a graph which was introduced by Chartrand, Johns, McKeon and Zhang in \cite{chartrand:rainbow}. In a graph $G$ with a given edge colouring, we call a path a \emph{rainbow path} if all its edges have distinct colours. We call the colouring a \emph{rainbow colouring} if every pair of vertices is joined by at least one rainbow path. The minimum number of colours required for such a colouring is called the \emph{rainbow connection number} (or \emph{rainbow connectivity}) $\rc(G)$ of the graph $G$. Rainbow colourings have received considerable attention since their introduction, being both of theoretical interest and highly applicable. A recent account of known results in this area is given in \cite{li:rainbowsurvey}.

A trivial lower bound for the rainbow connection number of a graph is its diameter, as pointed out in \cite{chartrand:rainbow}: In a rainbow colouring with $k$ colours, every pair of vertices is joined by a path of length at most $k$.

We will study rainbow connection numbers in the random graph setting. More specifically, for $n\in \N$ and $p \in [0,1]$, we consider the  Erd\H{o}s--R{\'e}nyi random graph model, denoted by $G \sim \mathcal{G}(n,p)$, which is a graph with $n$ vertices where each of the $\binom{n}{2}$ potential edges is present with probability $p$, independently. We say that an event $E=E(n)$ holds \emph{with high probability (whp)} if $\lim_{n \rightarrow \infty} \Pb(E(n)) = 1$. We call a sequence $p^*(n)$, $n \in \N$, a \emph{semisharp threshold} for a graph 
property $\mathcal{P}$ if there are constants $c,C>0$ such that if $p(n) \geq C p^*(n)$ for all $n$, then whp $\mathcal{G}(n,p(n)) \in \mathcal{P}$, and if $p(n) \leq c p^*(n)$ for all $n$, then whp $\mathcal{G}(n,p(n)) \notin \mathcal{P}$. This (non-standard) terminology reflects the fact that this notion is in between that of a truly sharp threshold, where these properties hold for any $C>1$ and any $c<1$, and of a (weak) threshold, where the conditions assume $p(n)/p^*(n) \rightarrow \infty$ and $p(n)/p^*(n) \rightarrow 0$.

Caro, Lev, Roditty, Tuza and Yuster \cite{caro:rainbow} showed that $\sqrt{\frac{\log n}{n}}$ is a semisharp threshold for the property $\rc(G) \leq 2$. This result was generalised by He and Liang \cite{he:rainbow} who showed that for any constant $d \in \N$, $\frac{(\log n)^{1/d}}{n^{1-1/d}}$ is a semisharp threshold for the property $\rc(G) \leq d$. Both of these results rely on random colourings. Since, as shown by Bollob\'as \cite{bollobas:diameter}, $\frac{(2\log n)^{1/d}}{n^{1-1/d}}$  is a sharp
threshold for the property $\diam(G)\le d$, a natural question is whether $\rc(G)\le d$ has the same \emph{sharp} threshold.

In a different direction, recently Frieze and Tsourakakis \cite{frieze:rainbow} showed that at the connectivity threshold $\frac{\log n + \omega(n)}{n}$ where $\omega(n) \rightarrow \infty$ and $\omega(n)=o(\log n)$, the rainbow connection number of a random graph is whp asymptotically $\max\{Z_1, \diam(G)\}$, where $Z_1$ denotes the number of degree $1$ vertices of the graph.

For $d=2$ we shall answer the question above in the strongest possible sense, showing that rainbow connection number $2$ occurs essentially at the same time as diameter $2$ in random graphs, and indeed even in the random graph process. To do this, we shall consider colourings constructed in two rounds, the first uniformly random, and the second `more intelligent'. We will first consider $\mathcal{G}(n,p)$ close to the threshold for diameter $2$.

\begin{theorem}
\label{theoremforrandomgraphs}
 Let $p=p(n)=\sqrt{\frac{2\log n+\omega(n)}{n}}$ where $\omega(n)=o(\log n)$ and let $G \sim \mathcal{G} (n,p)$. Then whp $\rc(G)=\diam(G)\in \{2,3\}$.
\end{theorem}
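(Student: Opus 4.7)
Since $\rc(G) \geq \diam(G)$ for every graph, the theorem decomposes into three sub-claims I would establish separately: (a) that whp $\diam(G) \in \{2,3\}$; (b) that whp $\rc(G) \leq 3$; and (c) that whenever $\diam(G) = 2$, also $\rc(G) \leq 2$ whp. For (a) I would compute the first moment of the number of vertex pairs with no path of length at most $3$ between them, which is bounded by $n^{2}(1-p^{3})^{n-2} = o(1)$ at our $p$ (the exponent behaves like $-n^{2}p^{3}(1+o(1))$, which tends to $-\infty$ very fast), yielding $\diam(G) \leq 3$ whp.

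For (b), the plan is to colour each edge of $G$ uniformly and independently from three colours. Any pair has on the order of $np^{2} \to \infty$ length-$2$ paths (each rainbow with probability $2/3$, independently across common neighbours) and on the order of $n^{2}p^{3} \to \infty$ length-$3$ paths (each rainbow with probability $2/9$, with small dependency degree). A Chernoff- or Janson-type bound on the number of rainbow paths of length at most $3$ gives that each pair is covered with probability $1-o(n^{-2})$, and a union bound over the $\binom{n}{2}$ pairs concludes this case.

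Claim (c) is where the real work lies and where I expect to use the two-round colouring construction the authors promise. A naive uniform $2$-colouring is insufficient: the probability that a pair with $k$ common neighbours is uncovered is $2^{-k}$, so the expected number of uncovered pairs is of order $n^{2}\,\E[2^{-\Bin(n-2,p^{2})}] \approx n e^{-\omega/2}$, which does not vanish for bounded $\omega$. I would split the randomness as $1-p = (1-p_{1})(1-p_{2})$ with $p_{2} = o(p)$ but $np\cdot p_{2} \to \infty$, realizing $G = G_{1} \cup G_{2}$ with $G_{i} \sim \mathcal{G}(n,p_{i})$ independent. In round~1, I would $2$-colour the edges of $G_{1}$ uniformly at random and let $B$ be the set of $G$-non-edges for which every $G_{1}$-common-neighbour yields a monochromatic length-$2$ path. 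In round~2, using the $G_{2}$-edges, which are independent of round~1, I would repair each pair $\{u,v\} \in B$ by finding a common neighbour $w$ such that at least one of $uw, wv$ lies in $G_{2} \setminus G_{1}$ and choosing those colours so that the path $u$--$w$--$v$ is bichromatic; the remaining $G_{2}$-edges can be coloured arbitrarily.

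The main obstacle will be running round~2 coherently for all of $B$ at once. Concretely, I expect to need (i) a first/second-moment argument, possibly stratified by $|N(u) \cap N(v)|$, showing that $|B|$ is sufficiently small; (ii) an argument that, since the event $\{u,v\} \in B$ is measurable with respect to $G_{1}$ and its colouring, it is independent of the $G_{2}$-edges incident to $u$ and $v$, so whp every bad pair has at least one repairable common neighbour; and (iii) a conflict-resolution step (greedy, matching, or a near-disjointness bound) ruling out two bad pairs demanding incompatible colours for the same $G_{2}$-edge. The tightest regime is bounded or slowly diverging $\omega(n)$, since $|B|$ is then non-trivial and the split into $G_{1}$ and $G_{2}$ has little room to maneuver; I expect the bulk of the work in this case.
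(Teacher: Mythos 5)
Your decomposition into (a)--(c) mirrors the paper's reduction, and (b) is essentially the paper's remark that a uniform $3$-colouring works at this density. The genuine gap is in (c), at your claim (ii). For \emph{no} admissible split $1-p=(1-p_1)(1-p_2)$ is it true that whp every bad pair has a repairable common neighbour. Write $p_1=\alpha p$, so $p_2=(1-\alpha)p(1+o(1))$. For a fixed non-adjacent pair $\{u,v\}$ and a third vertex $w$, the event that $w$ supplies either a rainbow $G_1$-path or a repairable path (at least one of $uw,wv$ fresh) has probability $\tfrac12 p_1^2+2p_1p_2+p_2^2 = \bigl(1-\tfrac{\alpha^2}{2}\bigr)p^2(1+o(1))$, and these events are independent over $w$ since they use disjoint edge sets. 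Hence the probability that $\{u,v\}$ is bad \emph{and} unrepairable is $\exp\bigl(-(1-\tfrac{\alpha^2}{2})np^2(1+o(1))\bigr)=n^{-2+\alpha^2+o(1)}$, so the expected number of such pairs is $n^{\alpha^2+o(1)}$, which tends to infinity for every $\alpha$ bounded away from $0$; with your proposed $p_2=o(p)$ (i.e.\ $\alpha\to1$) it is $n^{1-o(1)}$, and sending $\alpha\to0$ just abolishes the first round. A second-moment argument shows such pairs really occur whp, and they are compatible with $\diam(G)=2$: typically they have a few common neighbours, all giving monochromatic $G_1$-paths. So after your round~2 there remain non-adjacent pairs with no rainbow $2$-path, and your conflict-resolution step (iii) cannot rescue them, because it only arbitrates colours of fresh $G_2$-edges, while these pairs can only be saved by \emph{recolouring already-coloured $G_1$-edges}.

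This is precisely where the paper takes a different route: it never tries to eliminate all bad (``dangerous'') pairs, but instead proves local structural control of the survivors --- property $\mathcal{M}$: each vertex is in at most $3$ dangerous pairs, is joined to both ends of at most $15$ dangerous pairs, and is in at most one sparsely connected pair --- and then a purely deterministic statement (Proposition~\ref{manddimpliesr}) that $\mathcal{M}$ together with $\diam\le 2$ implies $\rc\le 2$, via a greedy ``flagging'' recolouring which uses the diameter-$2$ hypothesis to find a $2$-path for each leftover sparse dangerous pair and uses the numerical bounds in $\mathcal{M}$ (the $33<67$ count) to ensure no previously safe pair is rainbow-disconnected. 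Some deterministic repair phase of this kind, with quantitative control of how many recolourings touch each vertex (which in the paper costs the union bounds over $5$- and $9$-tuples in Lemmas~\ref{atmost3} and~\ref{atmost15}), is the missing idea in your outline. Separately, your first-moment bound in (a) is wrong as written: $n^2(1-p^3)^{n-2}$ has exponent of order $np^3\to0$, not $n^2p^3$, and replacing $n-2$ by the number of internal vertex pairs is not legitimate because distinct length-$3$ paths between $u$ and $v$ share edges; either condition on neighbourhoods first or quote Bollob\'as' diameter theorem, as the paper does (which in fact gets $\diam\le3$ for free from $\rc\le3$).
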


From \cite{bollobas:diameter} (see also Theorem 10.10 and Corollary 10.11 in \cite{bollobas:randomgraphs}), we immediately get the following corollaries.

\begin{corollary}
Let $p = \sqrt{\frac{2 \log n + c}{n}}$ where $c \in \R$ is a constant, and let $G \sim \mathcal{G} (n,p)$. Then $ \lim_{n \rightarrow \infty }\Pb(\rc (G) = 2) = e^{-e^{-c}/2}$ and $\lim_{n \rightarrow \infty }\Pb (\rc (G) = 3) = 1-e^{-e^{-c}/2}$.
\end{corollary}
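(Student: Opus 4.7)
The plan is to deduce this corollary by a direct combination of Theorem~\ref{theoremforrandomgraphs} with the classical sharp threshold result of Bollob\'as on $\diam(\mathcal{G}(n,p))$ near the diameter $2$ threshold, as cited immediately before the statement. Essentially all the work is already done by these two ingredients; the corollary is a one-line coupling of whp-equal random variables with a known limit distribution.

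First I would verify that Theorem~\ref{theoremforrandomgraphs} applies with $p$ of the stated form. Taking $\omega(n)\equiv c$, a constant sequence, trivially satisfies $\omega(n)=o(\log n)$, so the theorem yields that whp $\rc(G)=\diam(G)$ with this common value lying in $\{2,3\}$. Writing $E$ for this whp event, the symmetric differences $\{\rc(G)=2\}\triangle\{\diam(G)=2\}$ and $\{\rc(G)=3\}\triangle\{\diam(G)=3\}$ are both contained in $E^{c}$, hence have probability $o(1)$.

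Second, I would invoke Bollob\'as's sharp diameter result: at precisely this scaling, $\Pb(\diam(G)\le 2)\to e^{-e^{-c}/2}$, while $\Pb(\diam(G)\le 1)\to 0$ (since $p\to 0$) and $\Pb(\diam(G)\ge 4)=o(1)$ (the classical diameter concentration at this density). Hence $\Pb(\diam(G)=2)\to e^{-e^{-c}/2}$ and $\Pb(\diam(G)=3)\to 1-e^{-e^{-c}/2}$. Combining this with the first step gives the stated limits for $\rc(G)$.

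There is no real obstacle: the only point worth noting is the trivial check that a constant sequence is admissible as the $\omega(n)$ of Theorem~\ref{theoremforrandomgraphs}. All the probabilistic content lies in Theorem~\ref{theoremforrandomgraphs} and in Bollob\'as's diameter theorem; the corollary itself is purely a matter of transferring a known limit distribution across a whp equality of random variables.
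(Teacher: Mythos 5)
Your proposal is correct and is essentially the paper's own argument: the paper derives this corollary immediately from Theorem~\ref{theoremforrandomgraphs} (with $\omega(n)\equiv c$, which is $o(\log n)$) together with Bollob\'as's limiting distribution for $\diam(\mathcal{G}(n,p))=2$ at this scaling. Your transfer of the limit across the whp event $\{\rc(G)=\diam(G)\in\{2,3\}\}$ is exactly the intended one-line deduction.
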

\begin{corollary}
Let $p = \sqrt{\frac{2 \log n + \omega(n)}{n}}$ where $\omega(n)\rightarrow \infty$ such that $(1-p)n^2 \rightarrow \infty$, and let $G \sim \mathcal{G} (n,p)$. Then $\rc(G) = 2$ whp.
\end{corollary}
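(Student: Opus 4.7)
The plan is to derive the corollary directly from Theorem~\ref{theoremforrandomgraphs} and Bollob\'as's sharp threshold for $\diam(G)\le 2$ stated in the introduction, by establishing $\rc(G)\ge 2$ and $\rc(G)\le 2$ separately (each whp).

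For the lower bound I would use the universal inequality $\rc(G)\ge\diam(G)$ and show $\diam(G)\ge 2$ whp. The only graphs on $n$ vertices with diameter less than $2$ are the complete ones, and
\[
\Pr(G=K_n)=p^{\binom{n}{2}}\le\exp\!\bigl(-(1-p)\tbinom{n}{2}\bigr)=o(1)
\]
by the hypothesis $(1-p)n^2\to\infty$. Hence $G\ne K_n$ whp, so $\diam(G)\ge 2$ and $\rc(G)\ge 2$ whp.

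For the upper bound, in the regime $\omega(n)=o(\log n)$ covered by Theorem~\ref{theoremforrandomgraphs} we immediately obtain $\rc(G)=\diam(G)\in\{2,3\}$ whp, while Bollob\'as's sharp threshold (applied with $\omega(n)\to\infty$) forces $\diam(G)\le 2$ whp; together these give $\rc(G)=2$ whp. When $\omega(n)$ grows faster than $o(\log n)$, I would reduce to this case by setting $\omega'(n)=\min(\omega(n),\log\log n)$, $p'=\sqrt{(2\log n+\omega'(n))/n}\le p$, and coupling $G'\sim\mathcal{G}(n,p')$ as a spanning subgraph of $G$. Any rainbow $k$-colouring of $G'$ extends to one of $G$ by assigning each extra edge an arbitrary fixed colour (every rainbow path in $G'$ remains rainbow in $G$), so $\rc(G)\le\rc(G')$; since $(1-p')n^2\ge(1-p)n^2\to\infty$ and $\omega'(n)\to\infty$ with $\omega'(n)=o(\log n)$, the previous case applied to $G'$ gives $\rc(G')=2$ whp.

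No serious obstacle arises: the proof is a short deduction from Theorem~\ref{theoremforrandomgraphs} together with Bollob\'as's sharp threshold, the only mild subtlety being the range mismatch between the two, which is bridged by the monotone coupling argument above.
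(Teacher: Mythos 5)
Your proof is correct and is essentially the paper's own (unwritten) argument: the paper derives this corollary ``immediately'' from Theorem~\ref{theoremforrandomgraphs} together with Bollob\'as's diameter-two results, and your two ingredients --- ruling out $G=K_n$ via $(1-p)\binom{n}{2}\to\infty$ for the lower bound, and the monotone coupling with $p'=\sqrt{(2\log n+\omega'(n))/n}$, $\omega'=\min(\omega,\log\log n)$, to bridge the range where $\omega(n)$ is not $o(\log n)$ --- are exactly the suppressed details. The only caveat is that for $\omega'(n)\to\infty$ slowly a constant-factor ``sharp threshold'' statement alone does not yield $\diam(G')\le 2$ whp; you need the finer form of Bollob\'as's result (Theorem 10.10 / Corollary 10.11 of \cite{bollobas:randomgraphs}, precisely what the paper cites), which does give this.
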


We will in fact prove something even stronger than Theorem~\ref{theoremforrandomgraphs}. Consider the \emph{random graph process} $(G_t)_{t=0}^N$, $N= \binom{n}{2}$, which starts with the empty graph on $n$ vertices at time $t=0$ and where at each step one edge is added, chosen uniformly at random from those not already present in the graph, until at time $N$ we have a complete graph. A graph property is called \emph{monotone increasing} if it is preserved under the addition of further edges to a graph. For a monotone increasing graph property $\mathcal{P}$, let $\tau_{\mathcal{P}}$ be the \emph{hitting time} of $\mathcal{P}$, i.e.\, the smallest $t$ such that $G_t$ has property $\mathcal{P}$.

Consider the graph properties $\mathcal{D}$ and $\mathcal{R}$ given by
\begin{eqnarray*}\mathcal{D} &=& \{ G: \diam(G) \le 2 \} \\
 \mathcal{R} &=&  \{ G: \rc(G) \le 2 \}.
\end{eqnarray*}
Then $\mathcal{D}$ and $\mathcal{R}$ are monotone increasing. Since $\mathcal{D}$ is necessary for $\mathcal{R}$, we always have $\tau_\mathcal{D} \leq \tau_\mathcal{R}$; we will prove that whp $\mathcal{D}$ and $\mathcal{R}$ occur at the same time.

\begin{theorem}
\label{theoremforhittingtimes}
In the random graph process $(G_t)_{t=0}^N$, with high probability $\tau_\mathcal{D} = \tau_\mathcal{R}$.
\end{theorem}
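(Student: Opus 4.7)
The plan will be to prove the nontrivial inequality $\tau_\mathcal{R} \leq \tau_\mathcal{D}$ whp; since $\mathcal{R} \subseteq \mathcal{D}$ the reverse inequality always holds, yielding $\tau_\mathcal{R} = \tau_\mathcal{D}$. The first step is to localise the hitting time. Fix $\omega = \omega(n) \to \infty$ with $\omega = o(\log n)$, set $p^\pm = \sqrt{(2\log n \pm \omega)/n}$, and let $m^\pm = \lfloor p^\pm \binom{n}{2} \rfloor$. Bollob\'as's sharp threshold for diameter $2$, together with the standard transfer between $\mathcal{G}(n,p)$ and the random graph process, gives whp $m^- < \tau_\mathcal{D} \leq m^+$, so it suffices to exhibit a rainbow $2$-colouring of $G_{\tau_\mathcal{D}}$.

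The argument proceeds in two exposure rounds, following the strategy announced by the authors. View the process as first revealing $G^{(1)} := G_{m^-}$ in round $1$ and then adding one edge at a time up to $\tau_\mathcal{D}$ in round $2$. The graph $G^{(1)}$ has distribution $\mathcal{G}(n, m^-)$, close to $\mathcal{G}(n, p^-)$, and in particular has diameter greater than $2$ whp. Colour each edge of $G^{(1)}$ independently and uniformly with a colour from $\{1,2\}$; the round-$2$ edges are coloured adversarially after the entire graph is exposed.

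Next I would analyse the set $\mathcal{B}$ of \emph{bad} pairs, namely $\{u,v\}$ admitting no rainbow path of length $\leq 2$ in the coloured $G^{(1)}$. A pair at distance $2$ in $G^{(1)}$ with $C$ common neighbours is bad with conditional probability $2^{-C}$ (each common neighbour independently carries matching edge colours with probability $1/2$), and since $C$ concentrates around $n(p^-)^2 \approx 2\log n$, this yields an expected contribution $n^{2 - 2\log 2 + o(1)}$; a further $O(e^\omega)$ bad pairs come from pairs at distance greater than $2$. A second-moment / concentration argument should show $|\mathcal{B}|$ is close to its mean whp and that no vertex lies in too many bad pairs, which will be essential for the fix-up stage.

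The final and most delicate step is to use the round-$2$ edges to fix every bad pair simultaneously. Since $G_{\tau_\mathcal{D}} \in \mathcal{D}$, each $\{u,v\} \in \mathcal{B}$ has a common neighbour $w$ in $G_{\tau_\mathcal{D}}$; but badness means that every round-$1$ common neighbour carries two same-colour edges, so any fix must come either from the edge $uv$ itself being added in round $2$, or from a $w$ for which at least one of $uw, vw$ is a round-$2$ edge that we may colour freely to complete a rainbow path. The main obstacle will be coordinating these colour choices globally: a single round-$2$ edge may be required to take colour $1$ by one bad pair and colour $2$ by another. I would approach this by first showing that whp every bad pair has abundantly many potential fixers, and then either formulating the task as a sparse list-colouring / $2$-SAT instance solvable by a greedy argument, or applying the Lov\'asz Local Lemma exploiting the fact that each round-$2$ edge lies in comparatively few conflicting constraints.
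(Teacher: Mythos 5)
There is a genuine gap at your final fix-up stage, and it cannot be repaired within your framework of freezing the round-1 colours and only choosing colours for the round-2 edges. Count the potential fixes for a bad pair $\{u,v\}$: it can only be repaired by a round-2 edge $ux$ with $x\in\Gamma(v)$, a round-2 edge $vy$ with $y\in\Gamma(u)$ (or, rarer still, a path using two new edges, or the edge $uv$ itself). There are about $2np^-\approx 2\sqrt{2n\log n}$ candidate positions, and each becomes an edge in the window $(m^-,\tau_\mathcal{D}]$ with probability of order $(\omega+O(1))/\sqrt{n\log n}$, since $\tau_\mathcal{D}$ whp corresponds to density $\sqrt{(2\log n+c_\tau)/n}$ with $c_\tau$ a tight random correction. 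Hence the expected number of round-2 fixers of a given bad pair is only $\Theta(\omega)$, and a given bad pair receives \emph{no} fixer with probability roughly $e^{-\Theta(\omega)}=n^{-o(1)}$. Since, as you compute, there are about $n^{2-2\log 2+o(1)}$ bad pairs, whp many bad pairs have no usable round-2 edge at all; for such a pair no assignment of colours to the new edges --- greedy, 2-SAT, Local Lemma or otherwise --- can create a rainbow path. So the premise ``whp every bad pair has abundantly many potential fixers'' is false for a window of width $\omega=o(\log n)$, and you cannot widen the window beyond $\tau_\mathcal{D}$. (A secondary, unaddressed point: conditioned on the stopping time $\tau_\mathcal{D}$, the edges arriving in $(m^-,\tau_\mathcal{D}]$ are not an independent sprinkle, so even the probabilistic model for round 2 needs care.)

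The paper gets around exactly this obstruction in two ways. First, the randomised two-round analysis is carried out entirely \emph{below} the threshold, at $p=\sqrt{1.99\log n/n}$ with first round $\sqrt{1.01\log n/n}$, so the second sprinkle has width $\Theta(\sqrt{\log n/n})$ --- an effective ``$\omega$'' of order $\log n$ --- giving per-pair failure probability $O(n^{-0.8})$. Second, and crucially, even this is not enough to fix every dangerous pair, and the paper does not try to: the second round is used only to establish the \emph{monotone} property $\mathcal{M}$ (every vertex in at most $3$ dangerous pairs, adjacent to both ends of at most $15$, in at most one sparsely connected pair), which, being defined via a coloured spanning subgraph, persists to $G_{\tau_\mathcal{D}}$. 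The surviving dangerous pairs are then handled deterministically by Proposition~\ref{manddimpliesr}: using diameter $2$, one recolours at most $33$ old edges at each vertex to rainbow-connect them, and $\mathcal{M}$ guarantees this does not destroy the at least $67$ rainbow paths of any non-dangerous pair. To salvage your outline you would need both ingredients --- a safe mechanism for recolouring round-1 edges and structural control strong enough to make it safe --- at which point you have essentially reconstructed the paper's argument.
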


For the proofs of these theorems, we will need a number of definitions. In a graph $G$ with a given edge $2$-colouring, we call a pair of non-adjacent vertices \emph{dangerous} if they are joined by at most $d=66$ rainbow paths of length $2$. Moreover, we call a pair of non-adjacent vertices \emph{sparsely connected} if they are joined by at most $d=66$ paths of length $2$ (rainbow or otherwise) and \emph{richly connected} otherwise.

\begin{definition}
We say that a graph has \emph{property $\mathcal{M}$} if it has a spanning subgraph which has an edge 2-colouring such that
\begin{enumerate}[(i)]
 \item Every vertex is in at most $3$ dangerous pairs.
 \item Every vertex is joined by edges to both vertices of at most $15$ dangerous pairs.
 \item Every vertex is in at most one sparsely connected pair.
\end{enumerate}
\end{definition}
Note that $\mathcal{M}$ is a monotone increasing graph property because it is defined by the existence of a spanning subgraph with some property. The property of having a colouring satisfying conditions (i)--(iii) is not itself monotone increasing, since condition (ii) does not necessarily stay true if we add more edges.

The following two propositions will form the main part of our proof.

\begin{proposition}
\label{propertymbutnotd}
If $p=\sqrt{\frac{1.99 \log n}{n}}$, then whp the graph $G \sim \mathcal{G} (n,p)$ has property $\mathcal{M}$.
\end{proposition}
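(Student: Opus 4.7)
I would take the spanning subgraph to be $G$ itself and construct the required $2$-colouring in two rounds: a first uniformly random colouring, followed by a local correction that repairs vertices involved in too many dangerous pairs.

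Since $np^2 = 1.99\log n$, for any non-adjacent pair $\{u,v\}$ the number of common neighbours $A_{uv}$ has distribution close to Poisson with mean $1.99\log n$. A first-moment calculation gives
\[
\Pr(A_{uv}\le 66) = (1+o(1))\,\frac{(1.99\log n)^{66}}{66!}\,n^{-1.99},
\]
so the expected number of sparsely connected pairs is $n^{0.01+o(1)}$. A second-moment argument, exploiting the asymptotic independence of $\{A_{uv_1}\le 66\}$ and $\{A_{uv_2}\le 66\}$ for distinct $v_1,v_2$ (the relevant common neighbourhoods overlap in a vertex with probability $O(p^3)=o(1)$), shows that the expected number of vertices lying in two sparsely connected pairs is $o(1)$. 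Markov then yields condition (iii) with the subgraph $G$ itself.

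Next colour $E(G)$ uniformly at random with two colours. Given $A_{uv}=k$, the number of rainbow $P_2$ paths between $u$ and $v$ is $\mathrm{Bin}(k,1/2)$. Split pairs by the size of $A_{uv}$: if $A_{uv}\ge K\log n$ for a sufficiently large constant $K$, Chernoff gives $\Pr(\mathrm{Bin}(A_{uv},1/2)\le 66)\le n^{-10}$, and a simple union bound eliminates dangerous pairs of this kind. The surviving potentially dangerous pairs all lie in the ``moderate'' range $67\le A_{uv}\le K\log n$. For each such pair $\{u,v\}$ that is dangerous after the first round, a greedy second round picks a currently non-rainbow $P_2$ path $u$--$w$--$v$ (which exists since $A_{uv}-66\ge 1$) and flips the colour of one incident edge, converting that path to rainbow; iterate until the pair has more than $66$ rainbow paths. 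Conditions (i) and (ii) are then verified by counting, for each vertex, the dangerous pairs that survive the correction together with the pairs it is adjacent to.

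\textbf{Main obstacle.} The delicate point is controlling the cascade of the correction step: flipping the colour of $uw$ also toggles the rainbow status of $u$--$w$--$x$ for every other neighbour $x$ of $w$, so naive corrections could create fresh dangerous pairs. A typical pair has rainbow count concentrated around $0.995\log n$ after the initial colouring, giving a slack of roughly $\log n-66$ against becoming dangerous. The argument must therefore bound the number of flips incident to any single vertex by a quantity much smaller than $\log n$. This requires sharp Chernoff-type tail estimates on the number of pairs at a given vertex with $A_{uv}$ in the moderate range, together with a scheduling of the greedy corrections that avoids too many flips at any one vertex. The specific constants $d=66$, $3$ and $15$ in property $\mathcal{M}$ are tuned precisely to give the cascade estimate enough slack to close.
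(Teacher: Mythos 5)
Your treatment of condition (iii) and of the pairs with abnormally many common neighbours is fine in outline (the second-moment/independence claim for two pairs sharing a vertex needs a little care, since both events depend on $\Gamma(u)$, but this can be repaired by first conditioning on $\Gamma(u)$, as in the paper's own argument). The real problem is that the heart of the proposition is exactly the step you flag as the ``main obstacle'' and then leave unproved. At $p=\sqrt{1.99\log n/n}$ a non-adjacent pair has about $0.995\log n$ rainbow $2$-paths under a uniformly random colouring, so it is dangerous with probability $n^{-0.995+o(1)}$; hence after your first round there are $n^{1.005+o(1)}$ dangerous pairs and a \emph{typical} vertex lies in about $n^{0.005}$ of them, so condition (i) fails for almost every vertex and your correction phase must repair $\Theta(n^{1.005+o(1)})$ pairs. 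Each colour flip of an edge $uw$ toggles the rainbow status of a $2$-path for every pair $\{u,y\}$, $y\in\Gamma(w)$, and $\{x,w\}$, $x\in\Gamma(u)$ --- about $\sqrt{n\log n}$ pairs per flip --- and there are again $n^{1.005+o(1)}$ ``nearly dangerous'' pairs (with $67$, $68$, \dots rainbow paths) that a single unlucky flip can push into dangerousness; worse, the flips are chosen adaptively from the colouring, so no straightforward independence or union bound controls where they land, and fixing a pair $\{u,v\}$ by flipping $uv$-incident edges directly attacks the other pairs at $u$, which is precisely the quantity conditions (i) and (ii) constrain. Asserting that the constants $66$, $3$, $15$ give ``enough slack'' is not an argument (and in fact those constants are tuned for the deterministic recolouring in Proposition~\ref{manddimpliesr}, not for any cascade estimate); as it stands, nothing in the proposal shows the greedy flipping terminates, let alone that the terminal colouring satisfies (i) and (ii).

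The missing idea is the one the paper's proof is built on: avoid recolouring altogether by a two-round exposure. Write $G=G_1\cup G_{\mathrm{new}}$ with $G_1\sim\mathcal G(n,p_1)$, $p_1=\sqrt{(1+\epsilon)\log n/n}$, colour $G_1$ uniformly at random, and colour \emph{only the freshly sprinkled edges} intelligently. The key lemma is that whp every non-adjacent pair has $\ge 2\sqrt{(1+\epsilon/4)n\log n}$ \emph{exclusive} fixes in $G_1$ --- potential edges whose addition (in the right colour) creates a rainbow path for that pair and for no other dangerous pair. Exclusivity makes the sets of fixes for different dangerous pairs disjoint, so conditional on $G_1$ the numbers of fixes each dangerous pair receives in round two are independent binomials with mean $\approx 0.8\log n$, and no previously coloured edge is ever touched, so there is no cascade to control; conditions (i) and (ii) then follow from small multiplicative failure probabilities ($n^{-1/2-}$ for round one, $n^{-0.8}$ for round two) and union bounds over $4$-tuples of pairs. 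Your single-exposure flip-and-repair scheme has no substitute for this disjointness/fresh-randomness mechanism, so the proposal has a genuine gap rather than a complete alternative proof.
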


\begin{proposition}
\label{manddimpliesr}
 If a graph has properties $\mathcal{M}$ and $\mathcal{D}$, it also has property $\mathcal{R}$.
\end{proposition}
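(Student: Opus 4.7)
Let $H$ be the spanning subgraph of $G$ carrying a 2-colouring $c_0$ as guaranteed by property $\mathcal{M}$. The plan is to build a 2-colouring $c$ of $G$, essentially by extending $c_0$ and then performing a small number of local modifications, so that every pair of vertices in $G$ is joined by a rainbow path of length at most $2$.

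Adjacent pairs in $G$ are trivially rainbow-connected. A non-adjacent pair that is \emph{not} dangerous in $H$ has more than $d=66$ rainbow length-$2$ paths inside $H$, and hence inside $G$, so it remains rainbow-connected under any extension of $c_0$ and under small perturbations of it. The only pairs that need attention are the \emph{bad} pairs: those that are non-adjacent in $G$ and dangerous in $H$. By property (i) every vertex lies in at most $3$ such pairs, and by $\mathcal{D}$ every bad pair has at least one common neighbour in $G$.

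I would handle the bad pairs by choosing, for each one, a common neighbour $w\in G$ and enforcing $c(uw)\neq c(vw)$ --- either by assigning the colour on an edge of $G\setminus H$ freely, or, if both $uw$ and $vw$ lie in $H$ and agree under $c_0$, by flipping the colour of one $H$-edge. Sparsely connected bad pairs are the easiest: by (iii) they are vertex-disjoint, so their fixes cannot interact, and $\mathcal{D}$ still supplies each with a witness. For richly connected but dangerous pairs there are more than $d$ length-$2$ paths available in $H$ and possibly further common neighbours in $G\setminus H$, giving many witness candidates; a greedy assignment should work because (ii) bounds the number of dangerous pairs for which any given vertex is a common neighbour by $15$, keeping the overlap between different pairs' fix-edges small.

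The main obstacle is ruling out collateral damage: a colour flip at an edge $e$ toggles the rainbow status of every length-$2$ path through $e$, and could in principle strip a previously rainbow-connected pair of all its rainbow paths. The key point is that (i) and (ii) together bound the number of times any single edge is used in a fix operation by an absolute constant independent of $n$; since a non-dangerous pair starts with more than $66$ rainbow paths, it has ample slack to absorb the flips. Casting the simultaneous choice of witnesses and flips as a bounded-degree assignment problem on the conflict graph of bad pairs, and verifying its feasibility, is exactly where the calibrated constants $d=66$, $3$ and $15$ in the definition of $\mathcal{M}$ do their work, and I expect this combinatorial bookkeeping to be the hardest step.
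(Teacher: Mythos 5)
Your overall strategy is the same as the paper's (keep the $\mathcal{M}$-colouring, repair each dangerous pair via a length-$2$ path supplied by $\mathcal{D}$ or by the $\geq 67$ paths available, and use the fact that non-dangerous pairs have more than $66$ rainbow paths of slack), but the heart of the proof is exactly the ``combinatorial bookkeeping'' you defer, and the one claim you offer in its place is false. By (iii) the sparsely connected dangerous pairs are indeed pairwise vertex-disjoint, but their fixes \emph{can} interact: the middle vertex $z$ of the (possibly unique) path $vzw$ joining a sparse pair $\{v,w\}$ may itself lie in another sparse pair $\{z,z'\}$ whose repair path is $zvz'$ or $zwz''$, so an edge of your path may already carry a colour that must not be changed. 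Moreover a sparse pair may have \emph{no} length-$2$ path inside the spanning subgraph at all; $\mathcal{D}$ only guarantees one in $G$, so there is no freedom in choosing a witness and feasibility of your ``assignment problem'' is not automatic. The paper's proof is organised around precisely this point: it processes sparse pairs first, flags every edge whose colour has been committed, and uses (iii) to show that at most one of the two edges of the chosen path can already be flagged, so the other edge can be recoloured without undoing an earlier repair; the order (sparse pairs before rich ones) matters, since otherwise both edges of a sparse pair's unique path could already be committed to repairs of rich pairs.

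The damage-control estimate you invoke is also the wrong one, and it ignores the pairs with no slack. What is needed is a \emph{per-vertex} bound on the number of incident edges ever (re)coloured: by (i) at most $3$ such edges at a vertex $v$ come from dangerous pairs containing $v$ (one path per pair), and by (ii) at most $15\cdot 2=30$ come from dangerous pairs $\{z,z'\}$ with $v$ adjacent to both, i.e.\ at most $33$ per vertex. This is what guarantees both that every richly connected dangerous pair (having at least $67$ paths in the subgraph) still has a completely untouched path when its turn comes, and that every non-dangerous pair (having more than $66$ rainbow paths) loses at most $33+33=66$ of them. Your bound on ``the number of times any single edge is used'' yields neither statement, since many distinct edges incident to a fixed vertex could each be recoloured once. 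Finally, your colour flips must also protect the repairs already made for \emph{other dangerous pairs}, which after repair may have a single rainbow path and hence no slack at all; this is why the flagging discipline (never recolour a committed edge) is essential and why the constants are calibrated so that $3+2\cdot 15=33$ and $2\cdot 33<67$. Without carrying out this accounting, the proposal does not yet constitute a proof.
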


\section{Proofs}

Before turning to the proofs of Propositions~\ref{propertymbutnotd} and \ref{manddimpliesr}, we will show how they can be used to prove Theorems~\ref{theoremforrandomgraphs} and \ref{theoremforhittingtimes}.

\subsection{Proofs of Theorems~\ref{theoremforrandomgraphs} and \ref{theoremforhittingtimes}}

\begin{proof}[Proof of Theorem~\ref{theoremforrandomgraphs}]
Let $p=p(n)=\sqrt{\frac{2\log n+\omega(n)}{n}}$ where $\omega(n)=o(\log n)$, and let $G \sim \mathcal{G}(n,p)$. Since $p$ is well above the threshold $\frac{(\log n)^{1/3}}{n^{2/3}}$ for the property $\rc(G)\le 3$ established by He and Liang \cite{he:rainbow}, we certainly have $\rc(G)\le 3$ whp. In fact, for this $p$, it is easy to check that a random 3-colouring is rainbow whp.
Since ${p}^{\binom{n}{2}}=o(1)$, whp $G$ is not complete, so whp $\diam(G) \ge 2$. Since $\diam (G) \le \rc(G)$, it remains only to show that whp $\diam(G)=2$ implies $\rc(G)=2$.

For $n$ large enough, $p \geq \sqrt{\frac{1.99 \log n}{n}}$. Since $\mathcal{M}$ is monotone increasing, it follows from Proposition~\ref{propertymbutnotd} that whp $G$ has property $\mathcal{M}$. By Proposition~\ref{manddimpliesr}, if $\diam(G)=2$, i.e., $G$ has property $\mathcal{D}$, then $G$ also has property $\mathcal{R}$, so its rainbow connection number is at most $2$.
\end{proof}

For Theorem~\ref{theoremforhittingtimes}, we need to construct the random graph process so that we can couple it with $\mathcal{G}(n,p)$, $p \in [0,1]$.

\begin{proof}[Proof of Theorem~\ref{theoremforhittingtimes}]

Take a set $V$ of vertices where $|V|=n$, and assign to each potential edge $e$ a random variable $X_e$ which is distributed uniformly on $[0,1]$, independently. Order the potential edges in ascending order of the corresponding random variables $X_e$. Almost surely, no two of the $X_e$ take the same value, and any order of the $X_e$ is equally likely. Therefore, we can add the edges to the graph one-by-one in the ascending order of the corresponding $X_e$, yielding a random graph process $(G_t)_{t=0}^N$, $N= \binom{n}{2}$, with the required distribution.

Let $p=\sqrt{\frac{1.99 \log n}{n}}$ and let $G=(V,E)$ where $e\in E$ iff $X_e\leq p$. Then since the random variables $X_e$ are i.i.d.\ and distributed uniformly on $[0,1]$, $G \sim \mathcal{G}(n,p)$.

By Proposition~\ref{propertymbutnotd}, whp $G$ has property $\mathcal{M}$. Since, as shown by Bollob{\'a}s \cite{bollobas:diameter} (see Corollary 10.11 in \cite{bollobas:randomgraphs}), $\sqrt{\frac{2\log n}{n}}$ is a sharp threshold for the property $\mathcal{D}$, whp $G$ does not have property $\mathcal{D}$.


Since in the random graph process we added the edges in ascending order of their corresponding random variables, there is a (random) time $0\leq t \leq N$ such that $G=G_t$. Therefore, there is whp a time $t$ such that $G_t$ has property $\mathcal{M}$ but not property $\mathcal{D}$, so $\tau_\mathcal{M} < \tau_\mathcal{D}$ whp. From Proposition~\ref{manddimpliesr}, we get $\tau_\mathcal{R} \leq \max \lbrace \tau_\mathcal{D}, \tau_\mathcal{M} \rbrace = \tau_\mathcal{D}$ whp, and together with the trivial observation $\tau_\mathcal{D} \leq \tau_\mathcal{R}$, this implies the result. 
\end{proof}

\subsection{Bounds for binomial distributions}

For the proof of Proposition~\ref{propertymbutnotd}, we shall need some preliminary lemmas concerning bounds for binomial distributions. Recall the well-known Chernoff bounds (\cite{chernoffbound}, see also \cite[p.26]{janson:randomgraphs}).

\begin{lemma}
\label{chernoffbound}
Let $X$ be a random variable, distributed binomially with parameters $n \in \N$ and $p \in (0,1)$, and let $0<x<1$. 
\begin{enumerate}[(i)]
 \item If $x \geq p$, then $\Pb(X \geq nx) \leq \left[ \left( \frac{p}{x}  \right)^x \left( \frac{1-p}{1-x}\right)^{1-x} \right]^n$.
 \item If $x \leq p$, then $\Pb(X \leq nx) \leq \left[ \left( \frac{p}{x}  \right)^x \left( \frac{1-p}{1-x}\right)^{1-x} \right]^n$.
\end{enumerate}
\end{lemma}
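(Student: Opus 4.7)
The plan is to apply the standard Chernoff (Cram\'er) method: for a sum $X = \sum_{i=1}^n X_i$ of i.i.d.\ Bernoulli($p$) variables, bound the tail of $X$ by Markov's inequality applied to the exponential moment $e^{tX}$, and then optimize the resulting upper bound over the parameter $t$.

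For part (i), fix $t > 0$. On the event $\{X \ge nx\}$ we have $e^{tX} \ge e^{tnx}$, so Markov's inequality gives
\[
 \Pb(X \ge nx) \;\le\; \frac{\E[e^{tX}]}{e^{tnx}} \;=\; e^{-tnx}\,(1-p+pe^t)^n,
\]
using independence and the identity $\E[e^{tX_i}] = 1-p+pe^t$. Differentiating in $t$ (or, equivalently, matching $(1-p+pe^t)^{-1}pe^t$ to $x$) shows that the minimum of $e^{-tx}(1-p+pe^t)$ over $t>0$ is attained at $e^t = \tfrac{x(1-p)}{p(1-x)}$, which lies in $(1,\infty)$ precisely when $x>p$ (the boundary case $x=p$ yielding the trivial bound $1$). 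Substituting this value, a short calculation shows $1-p+pe^t = (1-p)/(1-x)$, whence the bound collapses to
\[
\Pb(X\ge nx) \le \Bigl[\bigl(p/x\bigr)^{x}\bigl((1-p)/(1-x)\bigr)^{1-x}\Bigr]^n,
\]
as claimed.

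For part (ii), the argument is symmetric: apply Markov's inequality to $e^{-tX}$ with $t>0$, giving
\[
\Pb(X\le nx) \;\le\; e^{tnx}\,(1-p+pe^{-t})^n,
\]
and optimize, obtaining $e^{-t} = \tfrac{x(1-p)}{p(1-x)} \in (0,1)$ when $x<p$. The algebraic simplification is identical to that in part (i) and produces the same closed form.

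No genuine obstacle is expected; the only non-trivial step is the algebraic verification that the optimal $t$ leads to the stated symmetric expression in $x$ and $p$. Alternatively, one can avoid the optimization by a change-of-measure argument (tilting the Bernoulli parameter from $p$ to $x$ and computing the likelihood ratio), which directly reproduces the product form $(p/x)^{x}((1-p)/(1-x))^{1-x}$; this viewpoint makes the symmetry between (i) and (ii) manifest.
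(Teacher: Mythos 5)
Your proof is correct: the exponential-moment (Cram\'er/Chernoff) argument with the optimal tilt $e^t = \frac{x(1-p)}{p(1-x)}$, together with the simplification $1-p+pe^t = \frac{1-p}{1-x}$, gives exactly the stated bound, and the symmetric argument handles part (ii), with the boundary case $x=p$ trivial since the right-hand side is then $1$. The paper does not prove this lemma at all --- it quotes it as the classical Chernoff bound with references to Chernoff (1952) and Janson, {\L}uczak and Ruci\'nski --- and your argument is precisely the standard proof given in those sources, so there is nothing further to reconcile.
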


A more convenient bound is given by the following corollary (see \cite[p.27]{janson:randomgraphs}).
\begin{corollary}
\label{chernoffcorollary}
Let $X$ be a random variable, distributed binomially with parameters $n \in \N$ and $p \in [0,1]$. If $0 <\epsilon \leq \frac{3}{2}$, then
\[
\Pb\left( \left| X - np \right| \geq \epsilon np \right) \leq 2 e^{-\epsilon^2 np /3}.
\]
\end{corollary}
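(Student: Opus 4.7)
The plan is to bound the upper tail $\Pb(X \ge (1+\epsilon)np)$ and the lower tail $\Pb(X \le (1-\epsilon)np)$ separately, each by $e^{-\epsilon^2 np/3}$, and then combine them by a union bound to produce the factor~$2$.

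For the upper tail I would apply Lemma~\ref{chernoffbound}(i) with $x = (1+\epsilon)p$, after dismissing the trivial case $(1+\epsilon)p \ge 1$ (there $\{X \ge (1+\epsilon)np\}$ is either empty or contained in $\{X=n\}$, so has probability at most $p^n$, which is easily dominated by $e^{-\epsilon^2 np/3}$). Taking logarithms inside the bracket and using the elementary inequality $\log(1+u) \le u$ on the ratio $\log\bigl((1-p)/(1-(1+\epsilon)p)\bigr) = \log\bigl(1 + \epsilon p/(1-(1+\epsilon)p)\bigr)$ makes the term carrying the $p$-dependent correction collapse cleanly to $+\epsilon p$, yielding
\[
\Pb\bigl(X \ge (1+\epsilon)np\bigr) \le \exp\bigl(-np\,\phi(\epsilon)\bigr), \qquad \phi(u) := (1+u)\log(1+u) - u.
\]
The lower tail is handled analogously: Lemma~\ref{chernoffbound}(ii) with $x = (1-\epsilon)p$, together with $\log(1+u) \ge u/(1+u)$ applied to the companion ratio, gives $\Pb(X \le (1-\epsilon)np) \le \exp(-np\,\phi(-\epsilon))$ when $0 < \epsilon < 1$. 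If $\epsilon \ge 1$, the event forces $X = 0$, so its probability is $(1-p)^n \le e^{-np} \le e^{-\epsilon^2 np/3}$ since $\epsilon^2/3 \le 3/4 < 1$. What remains in both cases is the elementary calculus inequality $\phi(\pm\epsilon) \ge \epsilon^2/3$ on the appropriate range.

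The main (though routine) obstacle is this last inequality. The cleanest approach is to prove the sharper Bennett-type bound $\phi(u) \ge u^2/(2 + 2u/3)$ for $u > -1$, by checking that the difference and its first derivative vanish at $u=0$ and that its second derivative has the right sign, and then to observe that $2 + 2|u|/3 \le 3$ whenever $|u| \le 3/2$. The hypothesis $\epsilon \le 3/2$ in the corollary is exactly what is needed to make the constant $3$ in the exponent valid; once the inequality for $\phi$ is in place, the corollary follows by substitution into Lemma~\ref{chernoffbound} and a union bound.
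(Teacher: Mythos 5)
Your proof is correct, but it is worth noting that the paper does not prove this corollary at all: it is quoted as a standard fact with a pointer to Janson, {\L}uczak and Ruci\'nski \cite{janson:randomgraphs}, p.~27. What you have written out is essentially the standard derivation behind that citation: substitute $x=(1\pm\epsilon)p$ into Lemma~\ref{chernoffbound}, use $\log(1+u)\le u$ on the $(1-p)/(1-x)$ factor to reduce each tail to $\exp(-np\,\phi(\pm\epsilon))$ with $\phi(u)=(1+u)\log(1+u)-u$, and then invoke a Bernstein-type lower bound on $\phi$, with the hypothesis $\epsilon\le\frac32$ entering only to turn $2(1+\epsilon/3)$ into $3$. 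Your treatment of the degenerate cases ($(1+\epsilon)p\ge 1$ for the upper tail, forcing $(1+\epsilon)p=1$ and probability $p^n$ with $p=1/(1+\epsilon)$; and $\epsilon\ge 1$ for the lower tail, giving $(1-p)^n\le e^{-np}\le e^{-\epsilon^2 np/3}$) is fine. One small remark on your key inequality: you claim the unified bound $\phi(u)\ge u^2/(2+2u/3)$ for all $u>-1$, which is slightly stronger on the negative side than what the textbook argument uses (there the lower tail is handled by the simpler $\phi(-\epsilon)\ge\epsilon^2/2$); your claim is nevertheless true, and your suggested verification does go through: with $g(u)=\phi(u)-\frac{3u^2}{6+2u}$ one has $g(0)=g'(0)=0$ and $g''(u)=\frac{1}{1+u}-\frac{27}{(3+u)^3}\ge 0$, since this reduces to $u^2(9+u)\ge 0$. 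So the only difference from the paper is that you supply the standard proof where the authors rely on the literature.
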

We will also need another consequence of the Chernoff bounds.
\begin{corollary}
\label{binomialbound}
Let $(n_i)_{i \in \Z}$ be a sequence of integers such that $ n_i \rightarrow \infty$ as $i \rightarrow \infty$, and let $(p_i)_{i \in \N}$ be a sequence of probabilities. Let $X_i \sim \Bin (n_i, p_i)$, and let $k \in \N$ be constant. Suppose that $\mu_i := n_i p_i \rightarrow \infty$ as $i \rightarrow \infty$. Then
\[
\Pb(X_i \leq k) = O(\mu_i^k e^{-\mu_i}).
\]
\end{corollary}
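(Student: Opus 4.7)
The plan is to bound $\Pb(X_i \le k)$ term-by-term by writing
\[
\Pb(X_i \le k) \;=\; \sum_{j=0}^{k} \binom{n_i}{j} p_i^j (1-p_i)^{n_i-j}
\]
and controlling each summand. Despite the Chernoff bounds being quoted just above, no concentration inequality is actually needed here: the estimate is really a direct Poisson-approximation calculation, since for small $j$ the individual point probabilities of $\Bin(n_i,p_i)$ are essentially those of $\mathrm{Po}(\mu_i)$.

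First, I would use the standard estimates $\binom{n_i}{j} \le n_i^j / j!$ and $1 - p_i \le e^{-p_i}$ to obtain
\[
\binom{n_i}{j} p_i^j (1-p_i)^{n_i-j} \;\le\; \frac{\mu_i^j}{j!}\, e^{-p_i(n_i - j)} \;=\; \frac{\mu_i^j}{j!}\, e^{-\mu_i} e^{j p_i}.
\]
Since $j \le k$ and $p_i \le 1$, the factor $e^{jp_i}$ is bounded by the constant $e^{k}$. Summing over $j = 0, 1, \ldots, k$ gives
\[
\Pb(X_i \le k) \;\le\; e^{k} e^{-\mu_i} \sum_{j=0}^{k} \frac{\mu_i^j}{j!}.
\]

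The last step is to observe that, because $\mu_i \to \infty$, for all sufficiently large $i$ we have $\mu_i \ge 1$, so each term $\mu_i^j/j!$ with $j \le k$ is bounded by $\mu_i^k / j!$, and hence the finite sum is at most $C_k \mu_i^k$ for some constant $C_k$ depending only on $k$. This yields $\Pb(X_i \le k) \le C_k e^{k}\, \mu_i^k e^{-\mu_i}$, which is the stated bound.

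There is no real obstacle here; the only mild subtlety is that one should not worry about the regime where $p_i$ is bounded away from $0$, because the bound $e^{jp_i} \le e^k$ handles it uniformly. The reason one wants to record this corollary as a separate lemma is simply that in the applications to follow the factor $\mu_i^k$ in front of $e^{-\mu_i}$ will be important, and the crude bound $\Pb(X_i \le k) \le e^{-\mu_i}(k+1)e^k$ (which one could equally easily derive) would be too weak.
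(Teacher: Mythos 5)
Your proof is correct, but it takes a genuinely different route from the paper. You bound the lower tail by summing the $k+1$ point probabilities directly, using $\binom{n_i}{j}\le n_i^j/j!$ and $1-p_i\le e^{-p_i}$ to get each term below $\frac{\mu_i^j}{j!}e^{-\mu_i}e^{jp_i}$, and then absorbing $e^{jp_i}\le e^k$ and $\mu_i^j\le\mu_i^k$ (for $\mu_i\ge 1$) into the implicit constant; this is an elementary Poisson-approximation argument that does not use any concentration inequality. The paper instead applies its Lemma~\ref{chernoffbound}(ii) with $x_i=k/n_i$, so that
\[
\Pb(X_i\le k)\le \Bigl(\tfrac{\mu_i}{k}\Bigr)^k\Bigl(\tfrac{1-p_i}{1-k/n_i}\Bigr)^{n_i-k}=O\bigl(\mu_i^k e^{-\mu_i}\bigr),
\]
using $1-y\le e^{-y}$ and $(1-k/n_i)^{n_i-k}\to e^{-k}$. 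Both arguments are short and give the same conclusion; yours is more self-contained (it would work even if the Chernoff lemma had not been quoted), while the paper's has the mild advantage of reusing a bound already stated and avoiding the term-by-term summation. One side remark of yours is wrong, though it does not affect the proof: the ``crude bound'' $\Pb(X_i\le k)\le (k+1)e^k e^{-\mu_i}$ is not derivable, since already $\Pb(X_i=k)\sim \mu_i^k e^{-\mu_i}/k!$ exceeds it once $\mu_i$ is large; the factor $\mu_i^k$ is not merely convenient for later applications but unavoidable.
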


\begin{proof}
Applying Lemma~\ref{chernoffbound} to $X_i$ with $x_i = \frac{k}{n_i}$ gives
\begin{eqnarray*}
\Pb(X_i \leq k) &=& \Pb(X_i \leq n_i x_i) \leq  \left( \frac{\mu_i}{k}  \right)^k \left( \frac{1-p_i}{1-\frac{k}{n_i}}\right)^{n_i-k} = O\left(\mu_i^k \cdot \frac{e^{-\mu_i + p_i k}}{e^{-k}}\right) \\
&=&  O(e^{-\mu_i} \mu_i^k),
\end{eqnarray*}
using the fact that $1-y \leq e^{-y}$ and that $\lim_{n \rightarrow \infty} (1-\frac{y}{n})^n = e^{-y}$ for every $y \in \R$.
\end{proof}

\subsection{Proof of Proposition~\ref{propertymbutnotd}}

\subsubsection{Overview}

We will generate the graph and an edge $2$-colouring together in two steps. First consider the random graph $G_1 \sim \mathcal{G}(n,p_1)$ where $p_1 = \sqrt {\frac{(1+\epsilon)\log n}{n}}$ and $\epsilon = 0.01$. We will colour the edges of this graph randomly.

Next, we will add more edges to generate $G_2 \sim \mathcal{G}(n,p)$ where $p= \sqrt{\frac{1.99 \log n}{n}}$. Each edge which is not already present will be added independently with probability $p_2$, where $1-p = (1-p_1)(1-p_2)$.  We will colour these new edges so they add a rainbow $2$-path to a dangerous pair whenever possible. We will show in Lemma~\ref{atmost3}, Corollary~\ref{atmost15corollary} and Lemma~\ref{atmost1} that whp this gives an edge colouring which fulfills conditions (i)--(iii) of property $\mathcal{M}$ (with $G_2$ itself as the spanning subgraph).

\subsubsection{First step: A random colouring}

Let $G_1 \sim \mathcal{G}(n,p_1)$ where $p_1 = \sqrt {\frac{(1+\epsilon)\log n}{n}}$ and $\epsilon = 0.01$. Colour the edges of $G_1$ using two colours independently and uniformly at random.

For a given pair $\{ v,w \}$ of vertices and another vertex $z \notin \{ v,w \}$, the probability that there is a rainbow path from $v$ to $w$ via $z$ is $\frac{1}{2} p_1^2 = \frac{(1+\epsilon) \log n}{2n} $, and this is independent for different $z$. Therefore, the number of rainbow paths of length $2$ joining $v$ and $w$ is distributed binomially with parameters $n-2$ and $\frac{(1+\epsilon) \log n}{2n} $ and so has mean $\mu= \frac{(1+\epsilon)}{2}\log n +o(1)$. By Corollary~\ref{binomialbound},
\[
 \Pb(\{ v,w\} \text{ is dangerous in $G_1$}) = O(n^{-\frac{1}{2}(1+\epsilon)} (\log n)^d).
\]
We will now gather some information about the structure of the random graph and of the dangerous pairs in $G_1$. We denote the \textit{neighbourhood} of a vertex $v$ in a graph $G$ by $\Gamma(v)$.

\begin{lemma}
\label{numberofneigbours}
 With probability $1 - o(n^{-2})$, for every vertex $v$ in $G_1$, 
\[\sqrt {\left(1+\frac{\epsilon}{2}\right) n \log n} \leq |\Gamma(v)| \leq \sqrt {(1+2 \epsilon) n \log n}.\]
\end{lemma}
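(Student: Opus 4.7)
The plan is to note that $|\Gamma(v)|$ is a binomial random variable, check that the interval $[L,U] := [\sqrt{(1+\epsilon/2)n\log n},\sqrt{(1+2\epsilon)n\log n}]$ surrounds its mean by a constant relative margin, apply the Chernoff bound at each vertex, and finish with a union bound over the $n$ vertices. Since Chernoff gives super-polynomial tails here, the strengthening from ``whp'' to $1-o(n^{-2})$ costs nothing.

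Concretely, each of the $n-1$ other vertices is a neighbour of $v$ independently with probability $p_1$, so $|\Gamma(v)|\sim \Bin(n-1,p_1)$ with mean
\[
\mu := (n-1)p_1 = \sqrt{\tfrac{(n-1)^2(1+\epsilon)\log n}{n}} = \sqrt{(1+\epsilon)n\log n}\cdot\bigl(1+O(n^{-1})\bigr).
\]
Since $\sqrt{1+\epsilon/2}$, $\sqrt{1+\epsilon}$ and $\sqrt{1+2\epsilon}$ are three distinct constants with $\sqrt{1+\epsilon/2}<\sqrt{1+\epsilon}<\sqrt{1+2\epsilon}$, one sees that for all large $n$ there is a constant $\delta=\delta(\epsilon)>0$ (any $\delta<\epsilon/(4(1+\epsilon))$ works) such that
\[
L\le \mu(1-\delta) < \mu(1+\delta) \le U.
\]

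Applying Corollary~\ref{chernoffcorollary} with this $\delta$ gives
\[
\Pb\bigl(|\Gamma(v)|\notin[L,U]\bigr) \le \Pb\bigl(\bigl||\Gamma(v)|-\mu\bigr|\ge \delta\mu\bigr) \le 2e^{-\delta^2\mu/3}.
\]
Because $\mu=\Theta(\sqrt{n\log n})$ and $\delta$ is a positive constant independent of $n$, the exponent $-\delta^2\mu/3$ grows like $-\Theta(\sqrt{n\log n})$, which dominates any constant multiple of $-\log n$; hence this probability is $o(n^{-k})$ for every $k$, in particular $o(n^{-3})$. A union bound over the $n$ choices of $v$ yields the claimed $1-o(n^{-2})$ bound. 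There is no real obstacle: the only content is the elementary arithmetic check that the three constants $1+\epsilon/2$, $1+\epsilon$, $1+2\epsilon$ leave a positive constant gap between $L/\mu$, $1$ and $U/\mu$, after which the Chernoff bound is overkill for the required tail.
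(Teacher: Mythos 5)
Your proof is correct and follows essentially the same route as the paper: identify $|\Gamma(v)|\sim\Bin(n-1,p_1)$ with mean $\sim\sqrt{(1+\epsilon)n\log n}$, apply Corollary~\ref{chernoffcorollary} with a constant relative deviation to get a failure probability $o(n^{-3})$ per vertex, and take a union bound over the $n$ vertices. The only difference is that you spell out the choice of $\delta$ explicitly, which the paper leaves implicit.
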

\begin{proof}
For a given vertex $v$, the number of neighbours of $v$ is binomially distributed with parameters $n-1$ and $p_1$ and has mean $(n-1)p_1 = \sqrt{\frac{(n-1)^2}{n}(1+\epsilon) \log n} \sim \sqrt{(1+\epsilon) n \log n }$. By Corollary~\ref{chernoffcorollary}, the probability that $v$ has more than $\sqrt {(1+2 \epsilon) n \log n}$ or fewer than $\sqrt {(1+\frac{\epsilon}{2}) n \log n}$ neighbours is $o(n^{-3})$. Taking the union bound over all vertices gives the result.
\end{proof}

\begin{lemma}
\label{numberofdangerous}
The probability that a given pair $\{v,w\}$ is dangerous in $G_1$ is $O(n^{-\frac{1}{2}(1 + \frac{\epsilon}{2})})$. Moreover, with probability $1-o(n^{-2})$, every vertex in $G_1$ is in at most $n^{\frac{1}{2}(1 - \frac{\epsilon}{4})}$ dangerous pairs.
\end{lemma}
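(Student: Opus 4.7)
The first assertion follows essentially immediately from the probability bound derived just before the lemma: since $(\log n)^d = o(n^{\epsilon/4})$ for any constant $d$, we have $O(n^{-(1+\epsilon)/2}(\log n)^d) = O(n^{-(1+\epsilon/2)/2})$, so nothing further is needed.

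For the second assertion, my plan is to fix a vertex $v$ and expose just enough information about $v$ to decouple the events ``$\{v,w\}$ is dangerous'' for varying $w$. Let $X_v$ denote the number of dangerous pairs containing $v$, and let $A_v$ be the event $|\Gamma(v)| \geq \sqrt{(1+\epsilon/2) n \log n}$, for which Lemma~\ref{numberofneigbours} gives $\Pb(A_v^c) = o(n^{-3})$. I condition on $A_v$, on the set $\Gamma(v) = S$, and on the $2$-colouring of the edges $\{vz : z \in S\}$. The key observation is that, under this conditioning, for each $w \in V \setminus (S \cup \{v\})$ the indicator that $\{v,w\}$ is dangerous is a function of the edges $\{zw : z \in S\}$ and their colours alone; as these edge sets are disjoint for different $w$, the dangerous indicators are \emph{conditionally independent}. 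By Corollary~\ref{binomialbound}, each has conditional probability at most $C(\log n)^d n^{-c}$ where $c = \tfrac{1}{2}\sqrt{(1+\epsilon/2)(1+\epsilon)}$; a short algebraic check (squaring) shows $c > (1+\epsilon/4)/2$.

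Hence, conditionally on the exposed data, $X_v$ is stochastically dominated by $\Bin(n, q)$ with $nq = o(n^{(1-\epsilon/4)/2})$, the polynomial gap $n^{c-(1+\epsilon/4)/2}$ comfortably swamping the $(\log n)^d$. Writing $k = n^{(1-\epsilon/4)/2}$, the standard tail bound $\Pb(\Bin(n,q) \geq k) \leq (enq/k)^k$ then gives
\[
\Pb\bigl(X_v > k \mid A_v\bigr) \leq (o(1))^{k} = o(n^{-3}),
\]
and a union bound over $v \in V$ combined with $\Pb(A_v^c) = o(n^{-3})$ yields the required $1 - o(n^{-2})$ bound.

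The step I expect to be the main (but not deep) obstacle is recognising that the apparent dependence among the events ``$\{v,w\}$ dangerous'' — all of which share the edges and colours incident to $v$ — can be removed by exposing precisely these local data, after which the residual randomness splits cleanly across different $w$. Once this conditional independence is in hand, the calculation collapses to a routine Chernoff estimate, with the numerical slack supplied by the slightly stronger lower bound on $|\Gamma(v)|$ afforded by Lemma~\ref{numberofneigbours}.
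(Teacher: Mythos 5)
Your proposal is correct and takes essentially the same route as the paper: expose the edges at $v$ together with their colours, observe that the events ``$\{v,w\}$ is dangerous'' are then conditionally independent over $w$ with individual probability $O\bigl(n^{-\frac{1}{2}\sqrt{(1+\epsilon)(1+\epsilon/2)}}(\log n)^d\bigr)$, and finish with a binomial tail bound and union bounds over $w$ and $v$. The only cosmetic differences are that you obtain the first assertion directly from the unconditional bound stated just before the lemma (the paper extracts it from the same conditional computation) and that you use the $(enq/k)^k$ tail estimate where the paper invokes its Chernoff corollary; both choices are valid.
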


\begin{proof}
Fix a vertex $v$ and explore the graph in the following way. Test all edges incident with $v$ and their colours. With probability $1-o(n^{-3})$, $|\Gamma(v)| \geq \sqrt {(1+\frac{\epsilon}{2}) n \log n}$ as in the proof of Lemma~\ref{numberofneigbours}. Assume this is the case.

Now let $w$ be a vertex with $w \notin \Gamma(v) \cup  \{v\}$. The number of edges between $w$ and $\Gamma(v)$ which have the correct colour for a rainbow $2$-path between $w$ and $v$ is distributed binomially with parameters $|\Gamma(v)|$ and $\frac{1}{2}p_1$, with mean at least $\frac{1}{2}\sqrt {(1+\epsilon)(1+\frac{\epsilon}{2})} \log n$. So the probability that $w$ has at most $d$ edges of the appropriate colour for a rainbow path to $\Gamma(v)$ is $O(n^{-\frac{1}{2}\sqrt{(1+\epsilon)(1+\frac{\epsilon}{2})}} (\log n)^{d})$ by Corollary~\ref{binomialbound}.

Therefore, $\lbrace v,w \rbrace$ is dangerous with probability $O(n^{-\frac{1}{2}(1 + \frac{\epsilon}{2})})$, and this happens independently for different $w \notin \Gamma(v) \cup \{ v \}$. So the number of dangerous pairs that $v$ is in is dominated by a binomial random variable with parameters $n$ and $O(n^{-\frac{1}{2}(1 + \frac{\epsilon}{2})})$, which has mean $O(n^{\frac{1}{2}(1 - \frac{\epsilon}{2})})$. By Corollary~\ref{chernoffcorollary}, with probability $1- o(n^{-3})$, $v$ is in at most $n^{\frac{1}{2}(1 - \frac{\epsilon}{4})}$ dangerous pairs. Taking the union bound over all $v$ gives the result.
\end{proof}

We call a pair of non-adjacent vertices $\lbrace x,y\rbrace$ in $G_1$ a \emph{fix} for a pair $\lbrace v,w \rbrace$ if adding an edge $e=xy$ of a certain colour would add a rainbow path of length $2$ between $v$ and $w$. We call a fix $\lbrace x,y\rbrace$ for a pair $\lbrace v,w \rbrace$ an \emph{exclusive fix} if there is no other dangerous pair (other than possibly $\lbrace v,w \rbrace$ if $\lbrace v,w \rbrace$ is dangerous) that $\lbrace x,y\rbrace$ is a fix for.

We expect to have about $2 n p_1$ fixes for every pair $\{v,w\}$ (of the form $\{x,w\}$ where $x \in \Gamma(v)$ or $\{v,y\}$ where $y \in \Gamma(w)$). We will now show that in fact most of these fixes are exclusive.

\begin{lemma}
\label{manyexclusivefixes}
Whp, every pair of vertices in $G_1$ is either adjacent or has at least $2 \sqrt {(1 + \frac{\epsilon}{4} ) n \log n }$ exclusive fixes.
\end{lemma}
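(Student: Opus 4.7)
The plan is to bound the number of candidate fixes from below and the number of non-exclusive ones from above. For any non-adjacent pair $\{v,w\}$ in $G_1$, every fix has the form $\{v,z\}$ with $z\in \Gamma(w)\setminus(\Gamma(v)\cup\{v\})$ (Type~A) or $\{w,z\}$ with $z\in \Gamma(v)\setminus(\Gamma(w)\cup\{w\})$ (Type~B). Applying Corollary~\ref{chernoffcorollary} (or a direct application of Lemma~\ref{chernoffbound}) to the common-neighbour counts $|\Gamma(v)\cap\Gamma(w)|\sim\Bin(n-2,p_1^2)$, which have mean $\sim(1+\epsilon)\log n$, and union bounding, whp $|\Gamma(v)\cap\Gamma(w)|\leq C\log n$ uniformly in $v,w$ for some constant $C$. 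Together with Lemma~\ref{numberofneigbours}, this gives whp at least $2\sqrt{(1+\epsilon/2)n\log n}-O(\log n)$ candidate fixes per non-adjacent pair, leaving a slack of order $\epsilon\sqrt{n\log n}$ above the target $2\sqrt{(1+\epsilon/4)n\log n}$.

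Writing $D_x$ for the set of dangerous partners of $x$, a Type~A fix $\{v,z\}$ fails to be exclusive iff it is a fix for some other dangerous pair, which must be either (a) $\{v,u\}$ with $u\in D_v\setminus\{w\}$ and $u\in\Gamma(z)$, or (b) $\{z,t\}$ with $t\in D_z$ and $t\in\Gamma(v)$. The number of type~(a) failures is bounded by $\sum_{u\in D_v}|\Gamma(u)\cap\Gamma(w)|\leq |D_v|\cdot C\log n$, which by Lemma~\ref{numberofdangerous} and the uniform common-neighbour bound is $O(n^{(1-\epsilon/4)/2}\log n)=o(\sqrt{n\log n})$ whp. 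Type~B fixes are handled symmetrically.

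The main obstacle is bounding the type~(b) failures, which I would control by the witness count
\[ W(v,w):=|\{(z,t):z\in\Gamma(w),\; t\in\Gamma(v),\; \{z,t\}\text{ dangerous}\}|. \]
A first-moment calculation using Lemma~\ref{numberofdangerous} yields $\E W(v,w)=O(n^{1/2-\epsilon/4}\log n)=o(\sqrt{n\log n})$, but Markov and Chebyshev are too weak to survive the union bound over $O(n^2)$ pairs, which is why this step is delicate. My plan is a $k$-th moment argument with $k=k(\epsilon)$ a sufficiently large constant (any $k>8/\epsilon$ will do): expand $\E[W(v,w)^k]$ and use that indicators $I_{(z,t)}$ and $I_{(z',t')}$ whose vertex sets $\{z,t\}$ and $\{z',t'\}$ are disjoint depend on almost-disjoint edge and colour sets (only the at most four edges between $\{z,t\}$ and $\{z',t'\}$ are shared), so that the dominant contribution is $(\E W)^k$ and the overlapping configurations are of strictly lower order. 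This gives $\E[W(v,w)^k]\leq C_k(\E W(v,w))^k$ with $C_k$ independent of $n$, hence $\Pb(W(v,w)\geq t)\leq C_k(\E W/t)^k=O(n^{-k\epsilon/4}(\log n)^{k/2})$, which is $o(n^{-2})$ for $k>8/\epsilon$. Taking a union bound gives $W(v,w)=o(\sqrt{n\log n})$ uniformly in $(v,w)$ whp, and combining with the type~(a) bound (and the analogous Type~B bounds) shows that whp every non-adjacent pair has at most $o(\sqrt{n\log n})$ non-exclusive fixes, well within the slack established above.
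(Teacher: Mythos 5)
Your strategy is sound and genuinely different from the paper's. You work directly in $G_1$ with a union bound over all $n^2$ pairs, splitting the candidate fixes into two types, subtracting the non-exclusive ones, and controlling the problematic count $W(v,w)$ (fixes spoiled by a dangerous pair $\{z,t\}$ with $z\in\Gamma(w)$, $t\in\Gamma(v)$) by a high-moment concentration argument exploiting the near-independence of danger indicators on disjoint vertex pairs. The paper instead sidesteps every dependence issue with an exposure trick: for a fixed pair $\{v,w\}$ it conditions on $G_1'=G_1-v$, shows (via Lemma~\ref{numberofneigbours} and a variant of Lemma~\ref{numberofdangerous} applied to $G_1'$) that the set of ``bad'' candidates $E_1'\cup E_2'\cup\Gamma'(w)$ has size at most $3n^{1-\epsilon/16}$ with probability $1-o(n^{-2})$, and then uses the fact that the edges at $v$ are independent of $G_1'$, so the number of exclusive fixes of the form $\{x,w\}$, $x\in\Gamma(v)$, dominates a $\Bin(n-2-3n^{1-\epsilon/16},p_1)$ variable; a single Chernoff bound with failure probability $o(n^{-2})$ then survives the union bound. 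What the paper's route buys is that the delicate step in your plan disappears entirely: no moment computation is needed because the bad set is measurable with respect to $G_1'$ and the relevant randomness (edges at $v$) is fresh. What your route buys is a more ``generic'' second-moment-style argument that does not need the removal of $v$, but the cost is real: the claim $\E[W(v,w)^k]\le C_k(\E W(v,w))^k$ is asserted, not proved, and it is exactly where the work lies. In particular, configurations in which several dangerous pairs share a vertex are \emph{not} covered by your ``at most four shared edges'' remark (danger events at a common vertex $z$ share all edges at $z$ and are positively correlated, e.g.\ through a low degree of $z$); one has to condition on $\Gamma(z)$ and its colours, use Lemma~\ref{numberofneigbours}-type bounds to discard atypically small degrees, and check over all overlap patterns that such terms lose roughly a factor $n^{1/2}$ per coincidence, so that $(\E W)^k$ indeed dominates. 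This can be carried out (and your choice $k>8/\epsilon$ then gives the required $o(n^{-2})$ tail at threshold $\Theta(\sqrt{n\log n})$, comfortably inside the $\approx\frac{\epsilon}{4}\sqrt{n\log n}$ slack you establish), but as written it is a sketch of the hardest step rather than a proof, and the conditioning-on-$G_1'$ argument of the paper is substantially shorter. One minor point: for the uniform bound $|\Gamma(u)\cap\Gamma(w)|\le C\log n$ you must indeed use Lemma~\ref{chernoffbound} (or Corollary~\ref{binomialbound}) rather than Corollary~\ref{chernoffcorollary}, since the latter is restricted to relative deviations $\epsilon\le\frac32$ and would not give an $o(n^{-2})$ tail at $C\log n$; you noted this alternative, so it is only a matter of making the right choice explicit.
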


\begin{proof}

Consider a pair of vertices $\lbrace v,w \rbrace$. Take $v$ out of the graph $G_1$ and just look at the remaining graph $G_1'$. Then by Lemma~\ref{numberofneigbours} and (a slight variant of) Lemma~\ref{numberofdangerous}, with probability $1-o(n^{-2})$, every vertex in $G_1'$ has at most $\sqrt {(1+2 \epsilon) n \log n}$ neighbours and is in at most $n^{\frac{1}{2}(1 - \frac{\epsilon}{4})}$ dangerous pairs (dangerous within $G_1'$).

In particular, if $E'_1$ denotes the set of vertices $x$ such that $x$ is in a dangerous pair (within $G_1'$) with a neighbour of $w$, and $E'_2$ denotes the set of vertices $x$ such that $x$ is a neighbour of a vertex that is in a dangerous pair (within $G_1'$) with $w$, then with probability $1-o(n^{-2})$, $|E'_1| \leq n^{1-\frac{\epsilon}{16}}$ and $|E'_2|  \leq n^{1-\frac{\epsilon}{16}} $.

In the whole graph $G_1$, let $E_1$ denote the set of all $x \in V \setminus \lbrace v, w \rbrace$ such that there is a neighbour $k \neq v$ of $w$ such that $\lbrace x, k \rbrace$ is dangerous (in $G_1$). Let $E_2$  denote the set of all $x \in V \setminus \lbrace v, w \rbrace$ which have a neighbour $l \neq v$ such that $\lbrace l,w \rbrace$ is dangerous (in $G_1$). Any pair $\{s,t\}\subset V\setminus \{v\}$ which is dangerous in $G_1$ is also dangerous in $G_1'$. Therefore, $E_1 \subset E'_1$ and $E_2 \subset E'_2$.

A pair $\lbrace x, w \rbrace$ where $x \in \Gamma(v)$ can only fail to be an exclusive fix for $\lbrace v, w \rbrace$ in one of the following three ways. Either $x$ and $w$ are adjacent, or there is a $k \in \Gamma(w)$ such that $\lbrace x , k \rbrace$ is dangerous, or there is an $l \in \Gamma(x) \setminus \lbrace v \rbrace$ such that $\{ l, w \}$ is dangerous (see Figure~\ref{nonexclusivefix}). If $v$ and $w$ are not adjacent, this can only happen if $x \in E_1 \cup E_2 \cup \Gamma'(w) \subset E'_1 \cup E'_2 \cup \Gamma'(w)$, where $\Gamma'(w)$ denotes the neighbourhood of $w$ in $G_1'$.

%

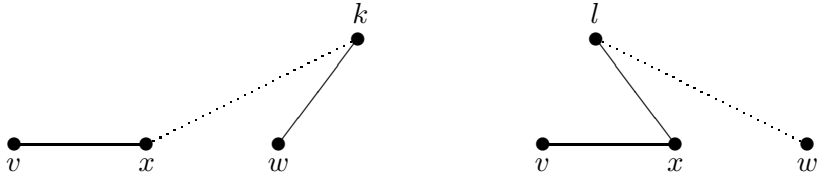
\begin{figure}[htb]
\begin{center}
\begin{picture}(300,60)
\unitlength = 1pt
\put(0,0){\circle*{5}}
\put(50,0){\circle*{5}}
\put(100,0){\circle*{5}}
\put(0,0){\line(1,0){50}}
\put(100,0){\line(3,4){30}}
\put(130,40){\circle*{5}}
\dottedline{3}(50,0)(130,40) 
\put(-3,-10){$v$}
\put(96,-10){$w$}
\put(47,-10){$x$}
\put(128,46){$k$}
\put(200,0){\circle*{5}}
\put(250,0){\circle*{5}}
\put(300,0){\circle*{5}}
\put(220,40){\circle*{5}}
\put(200,0){\line(1,0){50}}
\put(250,0){\line(-3,4){30}}
\dottedline{3}(220,40)(300,0)
\put(197,-10){$v$}
\put(296,-10){$w$}
\put(247,-10){$x$}
\put(218,46){$l$}
\end{picture}
\end{center}
\caption{\label{nonexclusivefix}Two ways in which $\{x,w\}$ can be a fix for a dangerous pair other than $\{v,w\}$. The dotted lines show dangerous pairs.}
\end{figure}

Condition on $G_1'$. With probability $1-o(n^{-2})$, $|E'_1 \cup E'_2 \cup \Gamma'(w)| \leq 3 n^{1-\frac{\epsilon}{16}}$. If this is the case, there are at least $n-2-3 n^{1-\frac{\epsilon}{16}}$ potential neighbours $x$ of $v$ such that $\lbrace x,  w \rbrace$ would be an exclusive fix for $\{v,w\}$; and each is actually adjacent to $v$ with probability $p_1$ independently of each other and of $G_1'$.

Therefore, if $v$ and $w$ are not adjacent,  the number of $x \in \Gamma(v)$ such that $\lbrace x, w \rbrace$ is an exclusive fix for $\lbrace v, w \rbrace$ is bounded from below by a binomial random variable with parameters $n-2-3 n^{1-\frac{\epsilon}{16}}$ and $p_1 = \sqrt {\frac{(1+\epsilon)\log n}{n}}$, which has mean greater than  $\sqrt {(1+\frac{\epsilon}{2})n \log n}$ if $n$ is large enough. By Corollary~\ref{chernoffcorollary}, it follows that with probability $1-o(n^{-2})$, there are at least $\sqrt {(1 + \frac{\epsilon}{4} ) n \log n }$ exclusive fixes of the form $\lbrace x, w \rbrace$ where $x \in \Gamma(v)$. Analogously, with probability $1-o(n^{-2})$, there are at least $\sqrt {(1 + \frac{\epsilon}{4} ) n \log n }$ exclusive fixes of the form $\lbrace v, y \rbrace$ where $y \in \Gamma(w)$, so overall with probability $1-o(n^{-2})$, there are at least $2 \sqrt {(1 + \frac{\epsilon}{4} ) n \log n }$ exclusive fixes for $\{v,w\}$.
\end{proof}

\subsubsection{Second step: More edges with a more intelligent colouring}

Now we are ready to introduce some additional edges which will be coloured more intelligently. Each edge which is not already present in the graph is now added independently with probability $p_2$, where $p_2$ is chosen so that $1-p = (1-p_1)(1-p_2)$. This ensures that after the second step, the probability that a particular edge is present is exactly $p = \sqrt{\frac{1.99 \log n}{n}}$.

Note that $p_2 = p - p_1 +p_1p_2 \geq p - p_1 = \frac{\sqrt{1.99 \log n} - \sqrt{(1 + \epsilon)\log n}}{\sqrt{n}} \geq 0.4 \cdot \sqrt{\frac{ \log n}{n}}$ (recall that $\epsilon = 0.01$).

Whenever a new edge is a fix for a dangerous pair, we give it the appropriate colour so that it adds a rainbow path of length $2$ joining the dangerous pair. If there are several such dangerous pairs, we pick any colour (or the colour that suits the most, etc., it does not matter).

By Lemma~\ref{manyexclusivefixes}, whp in $G_1$ there are at least $2 \sqrt {(1 + \frac{\epsilon}{4} ) n \log n }$ exclusive fixes for every dangerous pair. Assume this from now on. These exclusive fixes will always get the correct colour for this pair if they are added. For a dangerous pair $\{ v,w\}$ in $G_1$, let $N_{\{ v,w\}}$ be the number of exclusive fixes of $\{ v,w\}$ added in the second step. By definition, the sets of exclusive fixes are disjoint for different dangerous pairs. Therefore, conditional on $G_1$, the random variables $N_{\{ v,w\}}$ are independent.

For a fixed dangerous pair $\{v,w\}$ in $G_1$, $N_{\{ v,w\}}$ is bounded from below by a binomial random variable with parameters $2 \sqrt {(1 + \frac{\epsilon}{4} ) n \log n }$ and $p_2$, which has mean at least $0.8 \sqrt{1 + \frac{\epsilon}{4}} \log n$. Therefore, by Corollary~\ref{binomialbound},
\begin{equation}
\label{atmostdin2ndstep}
 \Pb (N_{\{ v,w\}} \leq d) = O(n^{-0.8\cdot \sqrt{1 + \frac{\epsilon}{4} }} (\log n)^{d}) = O(n^{-0.8}).
\end{equation}

\begin{lemma}
\label{atmost3}
 In $G_2$ whp no vertex is in more than three dangerous pairs.
\end{lemma}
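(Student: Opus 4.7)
The plan is to observe that dangerousness can only be lost when passing from $G_1$ to $G_2$: if $\{v,w\}$ is dangerous in $G_2$ then $v$ and $w$ are non-adjacent in $G_2$, hence non-adjacent in $G_1$, and $\{v,w\}$ has at most $d$ rainbow paths of length $2$ in $G_2$, so \emph{a fortiori} at most $d$ in $G_1$ (since adding edges cannot remove rainbow $2$-paths). Hence $\{v,w\}$ was already dangerous in $G_1$. Moreover, each exclusive fix for $\{v,w\}$ that is actually added in the second step contributes exactly one new rainbow path of length $2$ joining $v$ and $w$, so if $\{v,w\}$ is still dangerous in $G_2$ then necessarily $N_{\{v,w\}} \leq d$.

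First I would condition on $G_1$ satisfying the high-probability conclusions of Lemma~\ref{numberofdangerous} and Lemma~\ref{manyexclusivefixes}, namely that every vertex lies in at most $D := n^{(1-\epsilon/4)/2}$ dangerous pairs of $G_1$ and every such pair has at least $2\sqrt{(1+\epsilon/4)\,n\log n}$ exclusive fixes. Failure of either event has probability $o(n^{-2})$, which is negligible. Now fix a vertex $v$. Because exclusive fixes for distinct dangerous pairs are, by definition, disjoint, the variables $N_{\{v,w\}}$ as $w$ ranges over the (at most $D$) dangerous partners of $v$ depend on disjoint sets of potential second-step edges, and hence are mutually independent conditional on $G_1$. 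By~(\ref{atmostdin2ndstep}), each of these variables satisfies $\Pr(N_{\{v,w\}} \leq d) = O(n^{-0.8})$.

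Then I would apply a union bound over $4$-subsets of the dangerous pairs through $v$: the probability that at least four of them survive into $G_2$ is at most
\[
\binom{D}{4}\cdot O(n^{-3.2}) \;=\; O\!\left(n^{2(1-\epsilon/4)-3.2}\right) \;=\; O(n^{-1.2-\epsilon/2}),
\]
and a further union bound over the $n$ choices of $v$ gives $O(n^{-0.2-\epsilon/2}) = o(1)$. The only real subtlety is the conditional independence step: one must invoke the definition of \emph{exclusive} fix carefully to conclude that the events $\{N_{\{v,w\}}\le d\}$ really do depend on disjoint portions of the second-step randomness. Once that is in hand, the bound is essentially a direct union bound, with comfortable slack (roughly a factor $n^{0.2}$) coming from the gap between the $n^{-0.8}$ tail in~(\ref{atmostdin2ndstep}) and the $n^{(1-\epsilon/4)/2}$ bound on $D$.
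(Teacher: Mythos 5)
Your proposal is correct, but it handles the first round differently from the paper, and the comparison is worth making explicit. The paper fixes a $5$-tuple $v,w_1,\ldots,w_4$ and bounds the probability that all four pairs $\{v,w_i\}$ are simultaneously dangerous in $G_1$ by $O(n^{-2})$; since these four events are \emph{not} independent (they share the neighbourhood of $v$), this requires a separate argument, namely counting the vertices $z$ that are middle vertices of a rainbow $2$-path from $v$ to \emph{some} $w_i$ (these events are independent over $z$, and at most $4d$ of them can occur). Multiplying by the second-round bound $(O(n^{-0.8}))^4$ from (\ref{atmostdin2ndstep}) and union-bounding over $n^5$ tuples gives $o(1)$ with plenty of room. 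You instead spend no first-round probability at all on a fixed tuple: you invoke the whp structural conclusion of Lemma~\ref{numberofdangerous} (each vertex has at most $n^{\frac12(1-\epsilon/4)}$ dangerous partners in $G_1$) together with Lemma~\ref{manyexclusivefixes}, and then union-bound only over $4$-subsets of the (random but bounded) set of dangerous partners of each $v$, using the conditional independence of the $N_{\{v,w\}}$ (disjoint exclusive-fix sets) and (\ref{atmostdin2ndstep}). Your reductions are sound: dangerousness in $G_2$ implies dangerousness in $G_1$, and survival into $G_2$ forces $N_{\{v,w\}}\le d$ because added exclusive fixes are correctly coloured and yield distinct rainbow $2$-paths; your arithmetic $n\cdot n^{2-\epsilon/2}\cdot O(n^{-3.2})=O(n^{-0.2-\epsilon/2})=o(1)$ checks out, though with less slack than the paper's $n^5\cdot o(n^{-5})$. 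What your route buys is the elimination of the joint first-round estimate (the $E_z$ trick), making the proof of this particular lemma shorter; what the paper's route buys is a per-tuple bound whose first-round ingredient is reused, in adapted form, in the proof of Lemma~\ref{atmost15}, where your shortcut would not directly apply. One cosmetic slip: Lemma~\ref{manyexclusivefixes} as stated only holds whp, not with probability $1-o(n^{-2})$; since you condition on it once globally rather than inside a union bound, an $o(1)$ failure probability suffices and nothing breaks.
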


\begin{proof}
 Let $L$ denote the event that all non-adjacent pairs of vertices have at least $2 \sqrt {(1 + \frac{\epsilon}{4} ) n \log n }$ exclusive fixes in $G_1$, so $L$ holds whp by Lemma~\ref{manyexclusivefixes}. Let $v$, $w_1$,\ldots,$w_4$ be distinct vertices, and let $D_{w_1,\ldots,w_4}^v$ denote the event that $\lbrace v, w_1\rbrace$,\ldots,$\lbrace v, w_4 \rbrace$ are dangerous in $G_2$. Then
\begin{equation}
\label{trallala1}
 D_{w_1,\ldots,w_4}^v \subset L^C \cup (L \cap D_{w_1,\ldots,w_4}^v ).
\end{equation}

Let  $\tilde D_{w_1,\ldots,w_4}^v$ denote the event that  $\lbrace v, w_1\rbrace$,\ldots,$\lbrace v, w_4 \rbrace$ are dangerous in $G_1$. Then, since $D_{w_1,\ldots,w_4}^v \subset \tilde D_{w_1,\ldots,w_4}^v$,
\begin{eqnarray}
\label{trallala2}
\Pb( L \cap D_{w_1,\ldots,w_4}^v) &=& \Pb( \tilde D_{w_1,\ldots,w_4}^v \cap D_{w_1,\ldots,w_4}^v  \cap L) \nonumber\\
&=& \Pb(\tilde D_{w_1,\ldots,w_4}^v \cap L) \Pb (D_{w_1,\ldots,w_4}^v | \tilde D_{w_1,\ldots,w_4}^v \cap L) \nonumber \\
&\leq& \Pb(\tilde D_{w_1,\ldots,w_4}^v) \Pb (D_{w_1,\ldots,w_4}^v | \tilde D_{w_1,\ldots,w_4}^v \cap L). 
\end{eqnarray}
We first want to bound $\Pb(\tilde D_{w_1,\ldots,w_4}^v)$. If $z \in V \setminus \lbrace v, w_1,\ldots,w_4\rbrace$, let $E_z$ be the event that there is a rainbow path of length $2$ from $v$ to at least one $w_i$ via $z$. The edge $vz$ is present in $G_1$ with probability $p_1$, and if it is present, each edge $zw_i$ is present in $G_1$ and has a different colour than $vz$ with probability $\frac{p_1}{2}$, independently. Therefore, $q:=\Pb(E_z)= p_1(1-(1-\frac{p_1}{2})^4) \sim 2p_1^2$, and the events $E_z$ are independent for all $z \in V \setminus \lbrace v, w_1,\ldots,w_4\rbrace$. Let $K$ be the number of vertices $z$ such that $E_z$ holds. If $\lbrace v, w_1 \rbrace$,\ldots,$ \lbrace v,w_4 \rbrace$ are all dangerous pairs, then $K \leq 4d$.

Since $K$ is distributed binomially with parameters $n-5$ and $q$, with mean $(n-5)q \sim 2np_1^2 = 2(1+\epsilon) \log n$, the probability that $K \leq 4d$ is $O(n^{-2(1+\frac{\epsilon}{2})} (\log n)^{4d})$ by Corollary~\ref{binomialbound}. Hence,
\begin{equation}
\label{trallala3}
 \Pb(\tilde D_{w_1,\ldots,w_4}^v) =  O(n^{-2}).
\end{equation}
Conditional on $G_1$, if $\tilde D_{w_1,\ldots,w_4}^v$ and $L$ hold, the probability of the event $N_{\{v,w_i\}} \le d$ that $\lbrace v, w_i \rbrace$ does not get at least $d+1$ of its exclusive fixes in the second round is $O(n^{-0.8})$ by (\ref{atmostdin2ndstep}), and these events are independent for different $w_i$. Therefore, by (\ref{trallala2}) and (\ref{trallala3}),
\[
 \Pb( L \cap D_{w_1,\ldots,w_4}^v) = O(n^{-2} n^{-3.2}) = o(n^{-5}).
\]
Hence, by (\ref{trallala1}),
\begin{eqnarray*}
 \Pb \left( \bigcup_{v, w_1,\ldots,w_4} D_{w_1,\ldots,w_4}^v \right) &\leq& \Pb(L^C) + \Pb \left( \bigcup_{v, w_1,\ldots,w_4} L \cap D_{w_1,\ldots,w_4}^v \right)\\
&\leq& o(1) + n^5 o(n^{-5}) = o(1). 
\end{eqnarray*}
\end{proof}

\begin{lemma}
\label{atmost15}
In $G_2$ whp no vertex is joined by edges to both vertices of more than $3$ vertex disjoint dangerous pairs.
\end{lemma}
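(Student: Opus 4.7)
The plan is to adapt the proof of Lemma~\ref{atmost3} to the setting of four vertex-disjoint pairs rather than four pairs sharing a common vertex $v$. Fix distinct vertices $v, a_1, b_1, \ldots, a_4, b_4$, let $B^v_{\vec a, \vec b}$ be the event that $v$ is adjacent in $G_2$ to each $a_i$ and $b_i$ and each $\{a_i, b_i\}$ is dangerous in $G_2$, and let $L$ be the whp event from Lemma~\ref{manyexclusivefixes}. As in (\ref{trallala1}) it suffices to bound $\sum \Pb(L \cap B^v_{\vec a, \vec b})$ over the $O(n^9)$ tuples.

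For the first-round factor I introduce an auxiliary event $A^{(2)}$, that all eight edges $\{va_i, vb_i\}$ lie in $G_2$, and for each $i$ the event $\hat D_i$, that the number $K_i$ of rainbow 2-paths in $G_1$ from $a_i$ to $b_i$ via some $z \notin \{v, a_1, b_1, \ldots, a_4, b_4\}$ is at most $d$. The key point is that, because the four pairs are vertex-disjoint, $A^{(2)}, \hat D_1, \ldots, \hat D_4$ depend on pairwise disjoint edge sets and are therefore mutually independent. Each $K_i \sim \Bin(n-9, p_1^2/2)$ has mean $\sim \tfrac{1+\epsilon}{2}\log n$, so Corollary~\ref{binomialbound} gives $\Pb(\hat D_i) = O(n^{-(1+\epsilon)/2}(\log n)^d)$, while $\Pb(A^{(2)}) = p^8 = O(n^{-4}(\log n)^4)$. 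Since any pair dangerous in $G_2$ is dangerous in $G_1$ and hence has $K_i \le d$, we have $B^v_{\vec a, \vec b} \subset A^{(2)} \cap \bigcap_i \hat D_i$, yielding
$$\Pb\bigl(A^{(2)} \cap \hat D_1 \cap \cdots \cap \hat D_4\bigr) = O\bigl(n^{-6-2\epsilon}(\log n)^{4d+4}\bigr).$$

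For the second-round refinement, let $N_i'$ count exclusive fixes of $\{a_i, b_i\}$ that do not involve $v$ and are added in step~2. Conditional on a realization of $G_1$ on which $L$ and all four $D_i^{(1)}$ hold, the $N_i'$ are mutually independent and independent of $A^{(2)}$ (their underlying step-2 edges are pairwise disjoint across $i$, since exclusive fix-sets for distinct dangerous pairs are by definition disjoint, and disjoint from the $v$-edges by construction), and each $\Pb(N_i' \le d \mid G_1) = O(n^{-0.8})$ by (\ref{atmostdin2ndstep}). Since $B^v_{\vec a, \vec b} \subset A^{(2)} \cap \bigcap_i D_i^{(1)} \cap \bigcap_i \{N_i' \le d\}$, the tower property combined with the first-round bound yields $\Pb(L \cap B^v_{\vec a, \vec b}) = O(n^{-3.2}) \cdot O(n^{-6-2\epsilon}(\log n)^{4d+4})$, so summing over the $O(n^9)$ tuples gives $O(n^{-0.2-2\epsilon}) \cdot \mathrm{polylog}(n) = o(1)$. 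The main subtlety is the potential dependence between the $v$-adjacency event $A^{(2)}$ and the dangerousness events; this is handled by weakening $D_i^{(1)}$ to $\hat D_i$ (dropping 2-paths through $v$ or through the other $a_j, b_j$) in the first round and by excluding $v$-incident exclusive fixes in the second, which still leaves essentially all of the $2\sqrt{(1+\epsilon/4)n\log n}$ fixes guaranteed by $L$ and hence still gives the $O(n^{-0.8})$ bound.
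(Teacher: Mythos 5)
Your proof is correct and follows essentially the same strategy as the paper's: make the first-round dangerousness events depend only on edges avoiding $v$ (and the named vertices) so they are independent of the eight adjacency edges, use $v$-free exclusive fixes so that the second-round factor $O(n^{-0.8})$ per pair is conditionally independent of the $v$-incident edges, and beat the union bound over $n^9$ tuples via $n^{-4}\cdot n^{-2(1+\epsilon)}\cdot n^{-3.2}$. The only difference is cosmetic: you treat the event $A^{(2)}$ directly as eight independent Bernoulli$(p)$ edges in $G_2$, whereas the paper splits according to how many of these edges already appear in $G_1$ (the events $A_k$, contributing $p_1^k p_2^{8-k}$); both give the same bound.
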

\begin{proof}
Let $v$, $u_i$, $w_i$, $i=1,\ldots,4$, be distinct vertices. Let $A$ denote the event that $v$ is adjacent in $G_2$ to all vertices $u_i$ and $w_i$, $i=1,\ldots,4$. Let $D$ denote the event that all pairs $\{u_i,w_i\}$, $i=1,\ldots,4$, are dangerous in $G_2$. Then we want to bound the probability of the event $A \cap D$.

For this, we will explore the edges of $G_2$ in several steps. First reveal the edges of the graph $G_1' = G_1 \setminus \{ v \}$ and their colours. Denote the event that all pairs $\{u_i,w_i\}$, $i=1,\ldots,4$, are dangerous in $G'_1$ by $D'$. Then $D \subset D'$. By a variant of Lemma~\ref{numberofdangerous}, a given pair $\lbrace u_i, w_i \rbrace$ is dangerous in $G_1'$ with probability $O(n^{-\frac{1}{2}(1+\frac{\epsilon}{2})})$, and it is easy to see that $\Pb \left(D' \right)=O(n^{-2(1+\frac{\epsilon}{4})})$. Indeed, for each $z \notin \{v, u_1, w_1,\ldots,u_4, w_4\}$, the probability that $z$ is the middle vertex of a rainbow path joining one of the pairs $\{u_i, w_i\}$, $i=1,\ldots,4$, in $G_1'$ is $2p_1^2(1+o(1))$. These events are independent for different $z$, and at most $4d$ of these events can hold for $D'$ to hold. Since $(n-9) 2p_1^2 \sim 2(1+\epsilon) 
\log n$, by Corollary~\ref{binomialbound}, we have $\Pb(D') = O(n^{-2(1+\frac{\epsilon}{2})} (\log n)^{4d}) = O(n^{-2(1+\frac{\epsilon}{4})})$.

Next, reveal the edges of $G_1$ incident with $v$ and their colours. They are independent from $G'_1$. For $k\in\{0,\ldots,8\}$, let $A_k$ denote the event that $v$ is adjacent in $G_1$ to exactly $k$ of the vertices $\{u_1, w_1,\ldots,u_4, w_4 \}$. Then, since $A_k$ and $D'$ are independent,
\begin{equation}
\label{eqn1}
 \Pb \left( A_k \cap D'\right) \leq  {\binom{8}{k}} p_1^k O( n^{-2(1+\frac{\epsilon}{4})}) =  O( n^{-2-\frac{k}{2}}  ).
\end{equation}
%

As before, let $L$ denote the event that in $G_1$ all non-adjacent pairs of vertices have at least $2 \sqrt {(1 + \frac{\epsilon}{4} ) n \log n }$ exclusive fixes, which holds whp by Lemma~\ref{manyexclusivefixes}. For every pair $\{u_i, w_i\}$, at most two exclusive fixes contain the vertex $v$ (namely $\{v, u_i\}$ and $\{v, w_i\}$). So if $L$ holds, then for $n$ large enough, all pairs $\{u_i, w_i\}$, $i=1,\ldots4$, are either adjacent or have at least $2 \sqrt {(1 + \frac{\epsilon}{8} ) n \log n }$ exclusive fixes which do not contain the vertex $v$. Call these fixes \emph{$v$-free exclusive fixes}.

Now add the edges of $G_2$ not incident with $v$. Let $D''$ denote the event that every pair $\{u_i, w_i\}$, $i=1,\ldots,4$, not adjacent in $G_1$ now gets at most $d$ of its $v$-free exclusive fixes. Note that $D \subset D''$. Conditional on $G_1$, if $L$ holds and $n$ is large enough, every non-adjacent pair $\{u_i, w_i\}$ has at least $2 \sqrt {(1 + \frac{\epsilon}{8} ) n \log n }$ $v$-free exclusive fixes, and each one is added with probability $p_2 \geq 0.4 \sqrt{\frac{\log n}{n}}$, independently. Hence, by Corollary~\ref{binomialbound}, if $L$ and $D'$ hold, 
\[
 \Pb(D'' \mid G_1) = \left(O(n^{-0.8 \sqrt{1 + \frac{\epsilon}{8}}} (\log n)^{d})\right)^4=O(n^{-3.2}).
\]
Finally, we add the remaining edges incident with $v$ in $G_2$. Note that $D''$ depends on ($G_1$ and) the edges of $G_2$ not incident with $v$. Therefore, conditional on $G_1$, $D''$ and $A$ are independent, so if $k \in \{0,\ldots,8\}$, whenever $L$, $D'$ and $A_k$ hold in $G_1$, we have 
\[\Pb(A \cap D\mid G_1)\le \Pb(A\cap D''\mid G_1)=\Pb(A\mid G_1)
\Pb(D''\mid G_1)= p_2^{8-k} O(n^{-3.2}). \]
This gives for $k \in \{0,\ldots,8\}$,
\begin{equation}
\label{eqn3}
 \Pb(A \cap D  \mid A_k \cap L  \cap D')  =  O(n^{-3.2-\frac{8-k}{2}} (\log n)^{\frac{8-k}{2}}).
\end{equation}
Since $A \cap D \subset \left( \bigcup_{k=0}^8 A_k \right) \cap D'$, we have with (\ref{eqn1}) and (\ref{eqn3}),
\begin{eqnarray}
\label{equationon-9}
 \Pb(A \cap D \cap L) &=& \sum_{k=0}^8 \Pb(A \cap D \cap L  \cap A_k \cap D') \nonumber \\
&=&  \sum_{k=0}^8 \Pb( L  \cap A_k \cap D') \Pb( A \cap D  \mid L  \cap A_k \cap D') \nonumber \\
&\leq& \sum_{k=0}^8 \Pb(A_k \cap D') \Pb( A \cap D  \mid L  \cap A_k \cap D') \nonumber \\
&\leq& \sum_{k=0}^8 O( n^{-2-\frac{k}{2}} ) O(n^{-3.2-\frac{8-k}{2}} (\log n)^{\frac{8-k}{2}}) \nonumber \\
&=& O(n^{-9.2} (\log n)^4) = o(n^{-9}).
\end{eqnarray}

Now, since we want to bound the probability that \emph{there exist} vertices  $v$, $u_1$, $w_1$, \ldots, $u_4$, $w_4$ such that $A \cap D$ holds for them, we now add indices $A^{v, (u_i, w_i)_i}$, $D^{(u_i, w_i)_i}$ to our events $A$ and $D$ to make clear which vertices they refer to. The event $L$ is a global event which is the same for all specific vertices $v$, $u_1$, $w_1$,\ldots,$u_4$, $w_4$, so it does not require an index. Then using (\ref{equationon-9}), the probability that there are vertices  $v$, $u_1$, $w_1$,\ldots,$u_4$, $w_4$ such that $A^{v, (u_i, w_i)_i}\cap D^{(u_i, w_i)_i}$ holds is at most
\begin{eqnarray*}
 \lefteqn{\Pb\left(\bigcup_{v, (u_i, w_i)_i}  (A^{v, (u_i, w_i)_i}\cap D^{(u_i, w_i)_i})\right)} \\
&\leq& \Pb(L^C) + \Pb\left(\bigcup_{v, (u_i, w_i)_i}  (A^{v, (u_i, w_i)_i} \cap D^{(u_i, w_i)_i} \cap L)\right) =o(1) + n^9 o(n^{-9}) \\
&=&  o(1),
\end{eqnarray*}
as required.
\end{proof}

\begin{corollary}
\label{atmost15corollary}
In $G_2$ whp no vertex is joined by edges to both vertices of more than $15$ dangerous pairs.
\end{corollary}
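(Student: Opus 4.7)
The plan is to deduce Corollary~\ref{atmost15corollary} directly from Lemmas~\ref{atmost3} and~\ref{atmost15}, both of which hold whp in $G_2$, via a simple matching/double-counting argument. Fix a vertex $v$, and let $\mathcal{F}_v$ be the family of dangerous pairs $\{a,b\}$ with $a,b \in \Gamma(v)$. Write $m = |\mathcal{F}_v|$; the goal is to show that whp, for every $v$, $m \leq 15$.

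First I would let $k$ be the size of a maximum vertex-disjoint subfamily of $\mathcal{F}_v$ (a matching in the graph whose edges are the pairs in $\mathcal{F}_v$). By Lemma~\ref{atmost15}, whp $k \leq 3$ simultaneously for every vertex $v$. Let $X$ be the set of $2k$ vertices appearing in this maximum matching. By maximality, every pair in $\mathcal{F}_v$ must meet $X$: otherwise one could enlarge the matching, contradicting the choice of $k$. So $X$ is a vertex cover of $\mathcal{F}_v$.

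Next I would double-count vertex--pair incidences in $X \times \mathcal{F}_v$. By Lemma~\ref{atmost3}, whp every vertex of $G_2$ lies in at most $3$ dangerous pairs, and hence in at most $3$ pairs of $\mathcal{F}_v$. Summing over $X$ gives
\[
\sum_{x \in X} \big|\{P \in \mathcal{F}_v : x \in P\}\big| \;\leq\; 3 \cdot 2k \;=\; 6k.
\]
On the other hand, each of the $k$ matching pairs contributes exactly $2$ to this sum (both endpoints lie in $X$), and each of the remaining $m-k$ pairs contributes at least $1$ (since $X$ is a vertex cover), so the sum is at least $2k + (m-k) = m + k$. Combining, $m + k \leq 6k$, so $m \leq 5k \leq 15$.

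There is no real obstacle here: the only point requiring care is the observation that the $2k$ matching vertices automatically form a vertex cover of $\mathcal{F}_v$, which is immediate from maximality. Taking the union bound over the whp events in Lemmas~\ref{atmost3} and~\ref{atmost15} yields the corollary.
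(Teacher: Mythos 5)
Your argument is correct and is essentially the paper's own proof: both deduce the bound from Lemma~\ref{atmost3} and Lemma~\ref{atmost15} via the elementary fact that a graph with maximum degree $3$ and no four pairwise disjoint edges has at most $15$ edges. The only cosmetic difference is that the paper proves this fact by greedily removing an edge and its endpoints (losing at most $2\Delta-1$ edges per step), while you prove it by noting that the vertices of a maximum matching form a vertex cover and double counting incidences; the content is the same.
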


\begin{proof}
By Lemma~\ref{atmost15}, whp no vertex is adjacent to both vertices of more than $3$ vertex disjoint dangerous pairs, and by Lemma~\ref{atmost3}, whp every vertex is in at most $3$ dangerous pairs. Assume this from now on.

Note that if a graph has maximum degree at most $\Delta \geq 1$ and more than $t (2 \Delta -1)$ edges, where $t \in \N_0$, then it contains at least $t+1$ pairwise vertex-disjoint edges. This can be seen by induction on $t$ --- note that if one edge and its endpoints are removed from the graph, there are more than $t (2 \Delta -1) - (2 \Delta -1) = (t-1)  (2 \Delta -1)$ edges left.

Therefore, if some vertex $v$ is joined to both vertices of more than $15= 3 \cdot (2 \cdot 3 - 1)$ pairs, and every vertex is in at most $3$ dangerous pairs, then $v$ is joined to both vertices of at least $4 = 3+1$ pairwise disjoint dangerous pairs, which is not possible.
\end{proof}

Recall that we call a non-adjacent pair of vertices \emph{sparsely connected} if they are joined by at most $d=66$ paths of length $2$ (rainbow or otherwise).

\begin{lemma}
\label{atmost1}
Whp every vertex in $G_2$ is in at most one sparsely connected pair.
\end{lemma}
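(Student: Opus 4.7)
The approach is to mimic Lemma~\ref{atmost3} via a union bound over triples $(v, w_1, w_2)$, working directly in $G_2 \sim \mathcal{G}(n, p)$ with $p = \sqrt{1.99 \log n / n}$. The colouring plays no role here since sparse connectivity depends only on the underlying graph.

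For distinct vertices $v, w_1, w_2$ and $i \in \{1, 2\}$, let $K_i$ be the number of $z \in V \setminus \{v, w_1, w_2\}$ with both $vz$ and $zw_i$ in $E(G_2)$. The number of length-$2$ paths from $v$ to $w_i$ exceeds $K_i$ by at most one (only the middle vertex $w_{3-i}$ is excluded), so $\{v, w_i\}$ sparsely connected forces $K_i \le d$. The key observation is that, conditional on $S := \Gamma(v) \cap (V \setminus \{v, w_1, w_2\})$, the count $K_i$ depends only on the edges between $S$ and $w_i$. Since these edge sets are disjoint for $i = 1$ and $i = 2$, $K_1$ and $K_2$ are conditionally independent given $S$, each distributed as $\Bin(|S|, p)$.

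By Corollary~\ref{chernoffcorollary}, $|S| \sim \Bin(n-3, p)$ satisfies $|S| \ge \sqrt{1.98 n \log n}$ with probability $1 - o(n^{-3})$ (since $(n-3)p \sim \sqrt{1.99 n \log n}$ and the required deviation is of relative order a constant, the Chernoff tail is of order $\exp(-c\sqrt{n/\log n}\cdot \log n)$, which beats any polynomial in $n$). A union bound over $v$ makes this hold for all $v$ simultaneously whp. On this event each $K_i$ dominates a $\Bin(\sqrt{1.98 n \log n}, p)$ variable of mean $\ge 1.98 \log n$, which is much larger than the constant $d$. Corollary~\ref{binomialbound} then gives $\Pb(K_i \le d) = O(n^{-1.98}(\log n)^d)$, and by conditional independence $\Pb(K_1 \le d, K_2 \le d \mid S) = O(n^{-3.96}(\log n)^{2d})$. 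Summing over the $O(n^3)$ triples $(v, w_1, w_2)$ gives $O(n^{-0.96}(\log n)^{2d}) = o(1)$.

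The point that requires care is the conditional independence step: the events ``$\{v, w_i\}$ sparsely connected'' for $i = 1, 2$ are positively correlated, because a low degree of $v$ makes both likelier, so a naive joint bound via the minimum of the two marginals would give only $O(n^{-1.99}(\log n)^d)$, which is too weak to survive a union bound over $n^3$ triples. Conditioning on $\Gamma(v)$ breaks the shared dependence on edges at $v$ and recovers an essentially independent-product bound, which is what drives the argument through.
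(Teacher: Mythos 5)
Your proof is correct and is essentially the paper's own argument: the paper likewise reveals the edges at $v$ first, notes $|\Gamma(v)|\ge\sqrt{1.98\,n\log n}$ with high enough probability, uses that the edge counts from different non-neighbours $w$ to $\Gamma(v)$ involve disjoint edge sets and are hence conditionally independent, bounds each tail by $O(n^{-1.98})$ via Corollary~\ref{binomialbound}, and finishes with the same $n^3\cdot n^{-3.96}$ union bound. The only differences are cosmetic (union bound organised over triples rather than per vertex, and your careful off-by-one handling of $w_{3-i}$ as a middle vertex).
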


\begin{proof}
Consider some vertex $v$ in $G_2$. Explore $G_2$ in the following way. Explore all edges incident with $v$. By Corollary~\ref{chernoffcorollary}, with probability $1-o(n^{-1})$, we have $|\Gamma(v)| \geq \sqrt{1.98 n \log n}$. Now for every vertex $w \notin \Gamma(v)$ (by definition sparse pairs are not adjacent), the probability that $w$ has at most $d$ edges to $\Gamma(v)$ is $O(e^{-\sqrt{1.98 \cdot 1.99} \log n} (\log n)^{d})= O(n^{-1.98})$ by Corollary~\ref{binomialbound}, and this is independent for different $w$. Therefore, the probability that $v$ is in two sparsely connected pairs is $O(n^2 (n^{-1.98})^2) = O(n^{-1.96})=o(n^{-1})$. Using the union bound, it follows that whp there is no such $v$.
\end{proof}
By Lemma~\ref{atmost3}, Corollary~\ref{atmost15corollary} and Lemma~\ref{atmost1}, the graph $G_2$ with the given edge colouring has property $\mathcal{M}$ whp (with $G_2$ itself as the spanning subgraph), which completes the proof of Proposition~\ref{propertymbutnotd}.

\subsection{Proof of Proposition~\ref{manddimpliesr}}
To prove that $\mathcal{D}$ and $\mathcal{M}$ imply $\mathcal{R}$, we will take the edge 2-colouring given by property $\mathcal{M}$ and re-colour some edges to make a rainbow colouring. We will do this by first re-colouring paths joining sparsely connected dangerous pairs (this step only works if there are such paths at all, i.e., if we have diameter $2$), and then doing the same for richly connected dangerous pairs.

So suppose properties $\mathcal{M}$ and $\mathcal{D}$ hold in some graph $G=(V,E)$. Take the spanning subgraph $G'=(V,E')$ and the edge 2-colouring of $G'$ given by property $\mathcal{M}$. Do not assign colours to the edges in $E \setminus E'$ yet.

We will now assign some colours and change the colours of some edges in $E$ in order to make all dangerous pairs rainbow connected. We will \emph{flag} all edges we (re-)assign a colour to as we go along so that they do not get reassigned another colour later on.

Call a pair of vertices \emph{sparsely sub-connected} if it is sparsely connected in the subgraph $G'$, and call it \emph{richly sub-connected} otherwise. Call a pair \emph{sub-dangerous} if it is dangerous in $G'$. Every sparsely connected pair in $G$ is also sparsely sub-connected. Every dangerous pair in $G$ is also sub-dangerous.

We will start with the sparsely sub-connected sub-dangerous pairs. Take some arbitrary order of these pairs. 

We will go through the sparsely sub-connected sub-dangerous pairs one by one in the given order, and each time ensure there is a rainbow path in $E$ joining them, which is then flagged. Let $\{v,w\}$ be a pair we consider. Since $\mathcal{D}$ holds, either $vw \in E$, in which case we do not need to do anything, or $v$ and $w$ are joined by at least one path of length $2$ in $E$.  Let $vzw$ be such a path.

It is not possible that both of the edges $vz$ and $zw$ are flagged already by the time we look at $\{v,w\}$: Suppose that the edge $e=vz$ is flagged already. This can only have happened in one of the following two ways as shown in Figure~\ref{twocases}. Either there is a vertex $w' \neq w$ such that $\{ v, w'\}$ is sparsely sub-connected and sub-dangerous and the path $vzw'$ was flagged for it, or there is a vertex $z'$ such that $\{z,z'\}$ is sparsely sub-connected and sub-dangerous and the path $zvz'$ was flagged for it. But the first case is impossible because by property $\mathcal{M}$, the vertex $v$ is in at most one sparsely sub-connected pair (namely $\{v,w\}$). So the edge $vz$ was flagged for a sparsely sub-connected sub-dangerous pair $\{z,z'\}$. Similarly, if $zw$ is flagged already, this can only be because there is a vertex $z''$ such that $\{z,z''\}$ is sparsely sub-connected and sub-dangerous and $zw$ was flagged for it. But then $z' \neq z''$, so $z$ is in two sparsely sub-connected pairs, contradicting part (iii) of the definition of $\mathcal{M}$.

\begin{figure}[htb]
\begin{center}
\begin{picture}(300,60)
\unitlength = 1pt
\put(0,0){\circle*{5}}
\put(50,0){\circle*{5}}
\put(100,0){\circle*{5}}
\put(0,0){\line(1,0){100}}
\put(50,0){\line(3,4){30}}
\put(80,40){\circle*{5}}
\put(200,0){\circle*{5}}
\put(250,0){\circle*{5}}
\put(300,0){\circle*{5}}
\put(200,0){\line(1,0){100}}
\put(200,0){\line(-3,4){30}}
\put(170,40){\circle*{5}}
\dottedline{3}(0,0)(80,40) 
\dottedline{3}(170,40)(250,0)
\put(-3,-10){$v$}
\put(96,-10){$w$}
\put(47,-10){$z$}
\put(25,2){$e$}
\put(81,46){$w'$}
\put(197,-10){$v$}
\put(296,-10){$w$}
\put(247,-10){$z$}
\put(221,2){$e$}
\put(163,46){$z'$}
\end{picture}
\end{center}
\caption{\label{twocases}Possible ways in which the edge $e$ could have been flagged before considering $\{v,w\}$. The dotted lines show sub-dangerous pairs other than $\{v,w\}$.}
\end{figure}
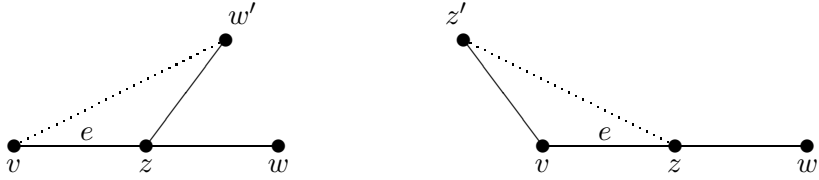

So take the path $vzw$. If necessary, adjust the colour of an un-flagged edge on it to make it a rainbow path, then flag both edges (if they are not flagged already).

Repeat this procedure until all sparsely sub-connected sub-dangerous pairs have rainbow paths. Now we will deal with richly sub-connected sub-dangerous pairs. Again take some arbitrary order of these pairs and consider them one by one.

Let $\{v,w\}$ be the richly sub-connected sub-dangerous pair we consider. By definition, it is either adjacent in $G$, in which case we do not need to do anything, or joined by at least $67$ paths of length $2$ within $E'$. Let $vzw$ be such a path. Then as before and as shown in Figure~\ref{twocases}, the edge $e=vz$ can only be previously flagged for another (sparsely or richly sub-connected) sub-dangerous pair in one of the following two ways. Either there is a vertex $w' \neq w$ such that $\{ v, w'\}$ is sub-dangerous and the path $vzw'$ was flagged for it --- since by property $\mathcal{M}$, $v$ is in at most $3$ sub-dangerous pairs in $G'$, at most $3$ edges in $E'$ incident with $v$ can be flagged this way (now or ever). Or there is a vertex $z'$ such that $\{z,z'\}$ is sub-dangerous and the path $zvz'$ was flagged for it --- since by property $\mathcal{M}$, $v$ is joined by edges to both vertices of at most $15$ dangerous pairs in $G'$, and for each such pair at most $2$ edges incident with $v$ are flagged, at most $30$ edges in $E'$ incident with $v$ can be flagged this way (now or ever).

So at most $33$ edges in $E'$ incident with $v$ can be flagged in this process. Analogously, at most $33$ edges incident with $w$ can be flagged. Since $\{ v,w\}$ is joined by at least $67$ paths of length $2$ in $G'$, there is at least one completely unflagged path at the time we look at $\{v,w\}$. Select one such path for $\{v,w\}$, adjust its colours if necessary to make it a rainbow path, then flag both of its edges and move on to the next richly sub-connected sub-dangerous pair.

Repeat this procedure until all richly sub-connected sub-dangerous pairs have rainbow paths. If there are any uncoloured edges left, assign them arbitrary colours. All sub-dangerous pairs are now joined by rainbow paths. It only remains to check that no non-sub-dangerous pairs have been rainbow disconnected in the process. By the same argument as above (in the description of the procedure for richly sub-connected sub-dangerous pairs), for every vertex $v$ at most $33$ edges incident with $v$ can be flagged and potentially re-coloured by the time we are done. If a pair $\{v,w\}$ is not sub-dangerous, it is either adjacent or is joined by at least $67$ rainbow paths, of which at most $66$ have been re-coloured. Therefore, every previously non-sub-dangerous pair still has at least one rainbow path left, so all pairs of vertices are joined by at least one rainbow path now.
\qed

\end{document}